\def\PZdefchar#1{
  \expandafter\def\csname frak#1\endcsname{\mathfrak{#1}}
  \expandafter\def\csname bf#1\endcsname{\mathbf{#1}}
  \expandafter\def\csname scr#1\endcsname{\mathcal{#1}}
  \expandafter\def\csname cal#1\endcsname{\mathcal{#1}}}
\def\PZdefloop#1{\ifx#1\PZdefloop\else\PZdefchar#1\expandafter\PZdefloop\fi}
\newcommand{\R}{\mathbb{R}}
\newcommand{\N}{\mathbb{N}}
\newcommand{\dif}{\mathrm{d}}
\newcommand{\one}{\mathbf{1}}
\newcommand{\dist}{\operatorname{dist}}
\numberwithin{equation}{section}
\newtheorem{theorem}{Theorem}
\numberwithin{theorem}{section}
\newtheorem{lemma}[theorem]{Lemma}
\newtheorem{corollary}[theorem]{Corollary}
\theoremstyle{definition}
\newtheorem{definition}[theorem]{Definition}
\theoremstyle{remark}
\newtheorem{remark}[theorem]{Remark}
\DeclarePairedDelimiter\abs{\lvert}{\rvert}
\DeclarePairedDelimiter\norm{\lVert}{\rVert}
\providecommand\given{}
\newcommand\SetSymbol[1][]{%
\nonscript\:#1\vert
\allowbreak
\nonscript\:
\mathopen{}}
\DeclarePairedDelimiterX\Set[1]\{\}{\renewcommand\given{\SetSymbol[\delimsize]}#1}
\DeclarePairedDelimiterXPP\EE[1]{\E}{\lparen}{\rparen}{}{\renewcommand\given{\SetSymbol[\delimsize]}#1} 
\DeclarePairedDelimiterX\inn[2]{\langle}{\rangle}{#1,#2}
\newcommand\@avsum[2]{%
  {\sbox0{$\m@th#1\sum$}%
   \vphantom{\usebox0}%
   \ooalign{%
     \hidewidth
     \smash{\vrule height\dimexpr\ht0+1pt\relax depth\dimexpr\dp0+1pt\relax}%
     \hidewidth\cr
     $\m@th#1\sum$\cr
   }%
  }%
}
\newcommand{\avsum}{\mathop{\mathpalette\@avsum\relax}\displaylimits}
\newcommand\@avprod[2]{%
  {\sbox0{$\m@th#1\prod$}%
   \vphantom{\usebox0}%
   \ooalign{%
     \hidewidth
     \smash{\vrule height\dimexpr\ht0+1pt\relax depth\dimexpr\dp0+1pt\relax}%
     \hidewidth\cr
     $\m@th#1\prod$\cr
   }%
  }%
}
\newcommand{\avprod}{\mathop{\mathpalette\@avprod\relax}\displaylimits}
\newcommand{\@avL}[2]{%
\ooalign{{$\m@th#1\mbox{--}$}\cr {$\m@th#1 L$}\cr}}
\newcommand{\avL}{\mathpalette\@avL\relax}
\newcommand{\@avell}[2]{%
\ooalign{{$\m@th#1\mbox{--}$}\cr {$\m@th#1 \ell$}\cr}}
\newcommand{\avell}{\mathpalette\@avell\relax}
\newcommand{\@avD}{%
  \ooalign{{$\mathrm{D}$}\cr \hidewidth\raise.2ex\hbox{$\vert$}\hidewidth\cr}}
\newcommand{\avDec}{\@avD\mathrm{ec}}
\newcommand{\Dec}{\mathcal{D}} 
\newcommand{\BilinDec}{\mathcal{B}} 
\DeclareMathOperator{\lin}{lin}
\DeclareMathOperator{\supp}{supp}
\newcommand{\Part}[2][]{\calP(\ifstrempty{#1}{}{#1,}#2)} 
\newcommand{\Uspace}{\calU^\circ} 
\def\dist{\operatorname{dist}}
\def\wb1{w_{B_{\delta^{-1}}}}
\numberwithin{equation}{section}
\newcommand{\lsm}[0]{\lesssim}
\newcommand{\mc}[1]{\mathcal{#1}}
\newcommand{\ov}[1]{\overline{#1}}
\newcommand{\E}[0]{\mathcal{E}}
\newcommand\numberthis{\addtocounter{equation}{1}\tag{\theequation}}
\definecolor{orange}{rgb}{1,0.5,0}
\begin{document}
\title[$\ell^2$ decoupling for the moment curve]{A short proof of $\ell^2$ decoupling for the moment curve}
\author[S.~Guo]{Shaoming Guo}
\address[SG]{Department of Mathematics, University of Wisconsin-Madison, Madison, WI-53706, USA}
\email{shaomingguo@math.wisc.edu}
\author[Z.~Li]{Zane Kun Li}
\address[ZKL]{Department of Mathematics, Indiana University Bloomington, Bloomington, IN-47405, USA}
\email{zkli@iu.edu}
\author[P.-L.~Yung]{Po-Lam Yung}
\address[PLY]{Department of Mathematics, The Chinese University of Hong Kong, Shatin, Hong Kong \quad \textit{and} \quad Mathematical Sciences Institute, The Australian National University, Canberra, Australia}
\email{plyung@math.cuhk.edu.hk \quad \textit{and} \quad polam.yung@anu.edu.au}
\author[P.~Zorin-Kranich]{Pavel Zorin-Kranich}
\address[PZK]{Mathematical Institute\\ University of Bonn}
\email{pzorin@uni-bonn.de}
\subjclass[2010]{11L07 (Primary) 11L15, 42B25, 26D05 (Secondary)}

\begin{abstract}
We give a short and elementary proof of the $\ell^{2}$ decoupling inequality for the moment curve in $\hat{\R}^k$, using a bilinear approach inspired by the nested efficient congruencing argument of Wooley \cite{MR3938716}.
\end{abstract}
\maketitle

\section{Introduction}
The sharp $\ell^{2}$ decoupling inequality for the moment curve, proved by Bourgain, Demeter, and Guth \cite{MR3548534}, implies Vinogradov's mean value theorem with the optimal exponents.
The optimal exponents in Vinogradov's mean value theorem have also been obtained by Wooley \cite{MR3938716}, using a nested efficient congruencing argument.
Efficient congruencing is a method of counting the number of solutions to Diophantine systems, and counting arguments do not usually imply decoupling inequalities.
Nevertheless, in this article, we borrow insights from \cite{MR3938716} (see also Heath-Brown \cite{arxiv:1512.03272}), to give a short proof of the $\ell^2$ decoupling inequality for the moment curve, namely Theorem~\ref{thm:main} below.

Let $k \in \N$ and $\Gamma \colon [0,1] \to \hat{\R}^k$ be the moment curve in $\hat{\R}^k$ (the Pontryagin dual of $\R^{k}$, which is itself isomorphic to $\R^{k}$), parametrized by
\[
\Gamma(\xi) := (\xi,\xi^2, \dotsc, \xi^k).
\]
For $\delta \in (0,1)$, let $\Part{\delta}$ denote the partition of the interval $[0,1]$ into dyadic intervals with length $2^{-\lceil \log_2 \delta^{-1} \rceil}$.
For a dyadic interval $J$, let $\calU_{J}$ be the parallelepiped of dimensions $\abs{J}^{1} \times \abs{J}^{2} \times \dotsm \times \abs{J}^{k}$ whose center is $\Gamma(c_J)$ and sides are parallel to $\partial^{1}\Gamma(c_{J})$, $\partial^{2}\Gamma(c_{J})$, $\dotsc$, $\partial^{k}\Gamma(c_{J})$, where $c_J$ is the center of $J$.
We write $p_k := k (k+1)$ for the critical exponent, and $\norm{\cdot}_{p} := \norm{\cdot}_{L^{p}(\R^{k})}$.

\begin{definition}
\label{def:dec-const}
For $\delta \in (0,1)$, the \emph{$\ell^{2}L^{p_{k}}$ decoupling constant} $\Dec_{k}(\delta)$ for the moment curve in $\hat{\R}^k$ is the smallest number for which the inequality
\begin{equation}
\label{eq:dec-const}
\norm[\Big]{ \sum_{J \in \Part{\delta}} f_{J} }_{p_{k}}
\leq \Dec_{k}(\delta)
\Bigl( \sum_{J \in \Part{\delta}} \norm[\big]{ f_{J} }_{p_{k}}^{2} \Bigr)^{1/2}
\end{equation}
holds for any tuple of functions $(f_{J})_{J \in \Part{\delta}}$ with $\supp \widehat{f_{J}} \subseteq \calU_{J}$ for all $J$.
\end{definition}

\begin{theorem}[{\cite{MR3548534}}]
\label{thm:main}
For every $k \in \N$ and every $\epsilon>0$, there exists a finite constant $C_{k,\epsilon}$ such that
\begin{equation}
\label{eq:main}
\Dec_{k}(\delta) \le C_{k,\epsilon} \delta^{-\epsilon}, \text{ for every } \delta\in (0, 1).
\end{equation}
\end{theorem}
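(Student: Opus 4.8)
The plan is to bypass the multilinear Kakeya input of \cite{MR3548534} entirely and instead prove \eqref{eq:main} by a bilinear iteration modeled on Wooley's nested efficient congruencing, organized as a bootstrap that forces the decoupling exponent to vanish.

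First I would reduce. Write $D(\delta):=\Dec_k(\delta)$; Definition~\ref{def:dec-const} already fixes the critical exponent $p_k$, and parabolic rescaling --- using that $\Gamma$ restricted to a dyadic interval $I$ is an affine image of $\Gamma$ on $[0,1]$, under which the parallelepipeds $\calU_J$, $J\subseteq I$, become the parallelepipeds at the rescaled scale --- gives the submultiplicativity $D(\delta_1\delta_2)\lesssim_k D(\delta_1)D(\delta_2)$, so it is enough to show $\gamma_k:=\limsup_{\delta\to 0}\frac{\log D(\delta)}{\log(1/\delta)}=0$. A Bourgain--Guth broad--narrow decomposition then bounds $\norm{\sum_J f_J}_{p_k}$ by a \emph{narrow} term, in which the non-negligible $f_J$ live over a single short sub-interval and which is reabsorbed into $D$ at a coarser scale, plus a \emph{bilinear} term $\norm{F_{I_1}F_{I_2}}_{p_k/2}^{1/2}$ with $I_1,I_2$ well separated and $F_{I_j}=\sum_{J\subseteq I_j}f_J$.

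The heart of the proof is the bilinear iteration. I would introduce a bilinear decoupling constant $B_m(\delta)$ carrying a \emph{nesting depth} $m$, controlling $\norm{F_{I_1}F_{I_2}}_{p_k/2}$ when $F_{I_1}$ is resolved only at the coarse scale $\sim\delta^{1/m}$ inside $I_1$ while $F_{I_2}$ is resolved at the full scale $\delta$ inside $I_2$, against a suitable $L^2$-type normalization. Starting from $\int\abs{F_{I_1}}^{p_k/2}\abs{F_{I_2}}^{p_k/2}$, one recursive step would be: (a) split $F_{I_1}$ into its $O_k(1)$ sub-caps at the next dyadic scale and retain one by Hölder and pigeonholing, at cost a constant $C_k$ and no power of $\delta$; (b) parabolically rescale so the active sub-interval of $I_1$ becomes $[0,1]$ again, at which point the non-degeneracy of the moment curve (the Wronskian of $\partial^1\Gamma,\dots,\partial^k\Gamma$ is a nonzero constant) keeps the rescaled copy of $I_2$ inside a cap well separated from the new $I_1$, so transversality persists and yields a genuine gain; (c) reassemble into a quantity of the same shape at depth $m+1$, possibly with an auxiliary linear factor $D(\cdot)$. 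Iterating and tracking constants should yield $B_m(\delta)\lesssim_{k,\epsilon}\delta^{-\epsilon}D(\delta)^{\theta_k}$ with $\theta_k<1$ uniformly in $m$; feeding this into the broad--narrow inequality and absorbing the narrow term by submultiplicativity gives the self-improving estimate $D(\delta)\lesssim_{k,\epsilon}\delta^{-\epsilon}D(\delta)^{\mu_k}$ with $\mu_k<1$, which on passing to $\gamma_k$ forces $\gamma_k=0$, i.e.\ \eqref{eq:main}.

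The main obstacle, where I expect essentially all of the work to lie, is the exponent bookkeeping in the iteration: checking that after the rescaling in (b) the transformed parallelepipeds are comparable to those prescribed by Definition~\ref{def:dec-const} at the new scale; choosing the normalization of $B_m(\delta)$ so that the reassembly in (c) is an identity of exactly the right shape; and, most delicately, arranging that the transversality gains from the successive steps accumulate rather than being eaten by the per-step constants $C_k$, so that the recursion genuinely closes with $\theta_k,\mu_k<1$. A subsidiary technicality is the passage between the $\ell^2$ form of the decoupling constant in Definition~\ref{def:dec-const} and the $L^2$-normalized bilinear form, which must lose only $\delta^{o(1)}$ and dimensional constants.
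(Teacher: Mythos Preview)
Your proposal has the right shape at the outermost level (reduce to bilinear, iterate, bootstrap the exponent to zero), but the iteration you describe is missing the structural ingredient that makes the argument close for $k\geq 4$. What you sketch --- a single nesting parameter $m$ tracking the scale disparity between $I_1$ and $I_2$, with the gain in step~(b) coming from ``transversality persists'' after rescaling --- is essentially the non-nested bilinear scheme that was carried out for $k=2$ in \cite{arxiv:1805.10551} and for $k=3$ in \cite{arxiv:1906.07989}. As the introduction notes, this scheme does \emph{not} close for $k\geq 4$: the gain from a single rescaling step is not strong enough to beat the accumulated H\"older losses, for exactly the reason Heath-Brown explains at the end of Section~3 of \cite{arxiv:1512.03272}. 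You correctly flag ``arranging that the transversality gains \dots\ accumulate'' as the main obstacle, but it is a genuine obstruction to the scheme as written, not just bookkeeping.

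What the paper actually does: it runs an \emph{induction on $k$}, and introduces a \emph{family} of asymmetric bilinear constants $\BilinDec_{l,a,b}(\delta)$ indexed by $l\in\{0,\dotsc,k-1\}$, built around the exponent split $\abs{f_I}^{p_l}\abs{f_{I'}}^{p_k-p_l}$ rather than the symmetric $\abs{f_I}^{p_k/2}\abs{f_{I'}}^{p_k/2}$. Transversality (Lemma~\ref{lem:transversality}) is exploited not merely through rescaling but by slicing along translates of the orthogonal complement of $V^{(k-l)}(\xi')$ and applying, on each slice, the $\ell^2 L^{p_l}$ decoupling inequality for the degree-$l$ moment curve (Lemma~\ref{lem:b}), which is available by the inductive hypothesis. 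A H\"older step (Lemma~\ref{lem:Holder}) then links $\BilinDec_l$ to $\BilinDec_{k-l}$ and $\BilinDec_{l-1}$, producing a coupled system of $k-1$ inequalities \eqref{eq:7} rather than a single recursion in $m$; summing these over $l$ cancels all intermediate quantities and forces $\eta=0$. Your step~(b) never says where the ``genuine gain'' actually comes from --- in the paper it is precisely this lower-degree decoupling on fibers, and without that input (and the $l$-indexed system it feeds) there is no mechanism to make the recursion close with $\theta_k<1$ for general $k$.
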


Strictly speaking, Theorem~\ref{thm:main} was stated in \cite{MR3548534} in a superficially weaker form, but the proof given there also yields the result as stated in Theorem~\ref{thm:main}, see \cite{arxiv:1811.02207} or \cite[Chapter 11]{DemeterBook} for more details.
It is now well-known that Theorem~\ref{thm:main} implies the following Vinogradov's mean value estimates (see \cite[Section 4]{MR3548534} for a proof):
\begin{corollary}[{\cite{MR3548534}, \cite{MR3938716}}]
\label{cor:Vinogradov-mean}
Let $k\geq 1$ and $s \geq 1$.
Then, for every $\epsilon>0$ and every $N \geq 1$, we have
\begin{equation}
\label{eq:Vinogradov-mean}
\int_{[0,1]^{k}} \abs[\Big]{ \sum_{n=1}^{N} a_{n} e(n x_{1}+\dotsb+n^{k}x_{k}) }^{2s} \dif x_{1} \dotsc \dif x_{k}
\lesssim_{k,s,\epsilon}
N^{\epsilon} (1+N^{s-k(k+1)/2}) \bigl( \sum_{n=1}^{N} \abs{a_{n}}^{2} \bigr)^{s}.
\end{equation}
Here $e(t) := \exp(2\pi i t)$ is the unit character.
\end{corollary}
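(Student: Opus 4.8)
The plan is to derive \eqref{eq:Vinogradov-mean} from the $\ell^2$ decoupling inequality \eqref{eq:dec-const} together with Theorem~\ref{thm:main}, by the standard transference from a continuous decoupling inequality to a discrete exponential sum bound (cf.\ \cite[Section 4]{MR3548534}). Write $F(x) := \sum_{n=1}^N a_n e(n x_1+\dots+n^k x_k)$ and recall $p_k = k(k+1)$. The first step is the reduction to the critical exponent $2s = p_k$: it suffices to prove
\[
\int_{[0,1]^k}\abs{F(x)}^{p_k}\,\dif x \lesssim_{k,\epsilon} N^{\epsilon}\Bigl(\sum_{n=1}^N\abs{a_n}^2\Bigr)^{p_k/2}.
\]
Indeed, for $1\le s\le k(k+1)/2$ the asserted bound follows from this by Hölder's inequality in $x$ on the probability space $[0,1]^k$ (and $1+N^{s-k(k+1)/2}\ge 1$), while for $s\ge k(k+1)/2$ it follows by writing $\abs{F}^{2s} = \abs{F}^{p_k}\abs{F}^{2s-p_k}$ and crudely estimating $\abs{F}^{2s-p_k}\le\bigl(N^{1/2}(\sum_n\abs{a_n}^2)^{1/2}\bigr)^{2s-p_k}$ via Cauchy--Schwarz, which produces exactly the factor $N^{s-k(k+1)/2}$.

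For the critical case, substitute $x_j = N^{-j} y_j$ to obtain
\[
\int_{[0,1]^k}\abs{F(x)}^{p_k}\,\dif x = N^{-k(k+1)/2}\int_{B}\abs{G(y)}^{p_k}\,\dif y,\qquad G(y) := \sum_{m=1}^N a_m\, e(\Gamma(m/N)\cdot y),
\]
where $B := [0,N]\times[0,N^2]\times\dots\times[0,N^k]$, and note that the points $\xi_m := m/N$ are $1/N$-separated, each lying in some $J_m\in\Part{1/N}$. Fix a Schwartz function $\psi$ on $\R$ with $\widehat\psi$ supported in $[-c_0,c_0]$ and $\psi\ge1$ on $[0,1]$, where $c_0=c_0(k)>0$ is a small constant chosen below, and set $\psi_B(y) := \prod_{j=1}^k\psi(y_j/N^j)$, so that $\psi_B\ge1$ on $B$ while $\supp\widehat{\psi_B}\subseteq\prod_{j=1}^k[-c_0N^{-j},c_0N^{-j}]$. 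The geometric input is that for every $\xi\in J\in\Part{1/N}$ one has $\Gamma(\xi)+\prod_{j=1}^k[-c_0N^{-j},c_0N^{-j}]\subseteq\calU_J$: since $\Gamma$ is a polynomial of degree $k$, the exact Taylor expansion $\Gamma(\xi)-\Gamma(c_J)=\sum_{j=1}^k\frac{(\xi-c_J)^j}{j!}\partial^j\Gamma(c_J)$ together with the upper-triangular structure of the matrix $[\partial^1\Gamma(c_J)\mid\dots\mid\partial^k\Gamma(c_J)]$ (diagonal $1,2,\dots,k!$) shows that, after $c_0(k)$ is taken small enough, every point of that box has $\partial^j\Gamma(c_J)$-coordinate of size at most $\abs{J}^j$. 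Consequently $\supp\widehat{e(\Gamma(\xi_m)\cdot\,)\,\psi_B}\subseteq\calU_{J_m}$ for each $m$, and grouping the $O(1)$ points $\xi_m$ lying in a common interval $J$ we may write $\psi_B\,G=\sum_{J\in\Part{1/N}}f_J$ with $\supp\widehat{f_J}\subseteq\calU_J$.

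Now apply the decoupling inequality \eqref{eq:dec-const} with $\delta=1/N$: since $\psi_B\ge1$ on $B$,
\[
\int_B\abs{G}^{p_k}\,\dif y \le \norm{\psi_B\,G}_{p_k}^{p_k} \le \Dec_k(1/N)^{p_k}\Bigl(\sum_{J\in\Part{1/N}}\norm{f_J}_{p_k}^2\Bigr)^{p_k/2}.
\]
Applying Cauchy--Schwarz in $m$ and using $\norm{\psi_B}_{p_k}^{p_k}=\prod_{j=1}^k N^j\,\norm{\psi}_{L^{p_k}(\R)}^{p_k}=N^{k(k+1)/2}\norm{\psi}_{L^{p_k}(\R)}^{kp_k}$, i.e.\ $\norm{\psi_B}_{p_k}\lesssim_k N^{1/2}$, gives $\sum_J\norm{f_J}_{p_k}^2\lesssim_k N\sum_n\abs{a_n}^2$. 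Undoing the rescaling cancels the factor $N^{k(k+1)/2}$ and yields $\int_{[0,1]^k}\abs{F(x)}^{p_k}\,\dif x\lesssim_k\Dec_k(1/N)^{p_k}(\sum_n\abs{a_n}^2)^{p_k/2}$; Theorem~\ref{thm:main}, applied with $\epsilon$ replaced by $\epsilon/p_k$, bounds $\Dec_k(1/N)^{p_k}$ by $C_{k,\epsilon}N^\epsilon$, completing the critical case and hence the corollary.

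The only step requiring genuine care is the geometric containment $\Gamma(\xi)+(\text{small axis box})\subseteq\calU_J$: pinning down $c_0(k)$ and tracking the constants from the Taylor expansion and from the change of basis to $\{\partial^j\Gamma(c_J)\}_{j=1}^k$. Everything else reduces to Hölder's and the Cauchy--Schwarz inequalities and a linear rescaling.
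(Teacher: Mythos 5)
There is a genuine gap, and it sits exactly at the step you flagged: the containment $\Gamma(\xi)+\prod_{j=1}^k[-c_0N^{-j},c_0N^{-j}]\subseteq\calU_J$ is false for $k\geq 2$, no matter how small the constant $c_0(k)$ is chosen. The matrix with columns $\partial^1\Gamma(c_J),\dotsc,\partial^k\Gamma(c_J)$ is indeed triangular, but when you solve the triangular system for the coordinates $s_1,\dotsc,s_k$ of an axis vector $v$, forward substitution makes $s_j$ pick up contributions $\sim c_J^{\,j-i}v_i$ from the \emph{larger} components $i<j$, not only from $v_j,\dotsc,v_k$. Concretely, for $k=2$ and $J$ with $c_J\sim 1$, the vector $v=(c_0N^{-1},0)$ has $\partial^2\Gamma(c_J)$-coordinate $s_2=-c_Jc_0/(2N)\sim c_0N^{-1}$, which exceeds the allowed width $\abs{J}^2\sim N^{-2}$ by a factor $\sim N$; in general the excess in the $\partial^j$ direction is $\sim N^{j-1}$. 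So $\supp\widehat{f_J}\not\subseteq C\,\calU_J$ for any constant $C$, the hypothesis of \eqref{eq:dec-const} fails, and the application of $\Dec_k(1/N)$ is unjustified. This is not a constant-chasing issue: the regions you actually produce are $\delta$-arcs of the curve thickened to width $\sim\delta$ in the transverse directions, and $\ell^2L^{p_k}$ decoupling into such fat neighborhoods with loss $\delta^{-\epsilon}$ is simply false (already for the parabola, testing with full indicator Fourier data forces a loss of a positive power of $\delta^{-1}$). Thus the anisotropic weight $\psi_B$ adapted to $B=[0,N]\times\dotsm\times[0,N^k]$ cannot work: any weight decaying at scale $N$ in $y_1$ has Fourier support of width $\gtrsim N^{-1}$ in the first frequency variable, which is too wide in the directions transverse to the tangent.

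The missing ingredient is periodicity, which is how the standard transference (the argument in \cite[Section 4]{MR3548534}, to which the paper defers rather than giving its own proof) proceeds. Since $G(y)=\sum_m a_m e(\Gamma(m/N)\cdot y)$ is periodic in $y_j$ with period $N^j$, one has $\fint_{B}\abs{G}^{p_k}=\fint_{[0,N^k]^k}\abs{G}^{p_k}$, and on the cube $Q=[0,N^k]^k$ one may use a weight $\psi_Q$ with $\supp\widehat{\psi_Q}\subseteq B(0,c_0N^{-k})$; a ball of radius $N^{-k}$ added to $\Gamma(\xi_m)$ \emph{does} lie in $C\calU_{J_m}$, because $N^{-k}$ is below the smallest width $\abs{J}^k$ (this is where your triangular-matrix computation is valid, since now all components of $v$ are $O(N^{-k})$). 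The remaining discrepancy between $C\calU_J$ and $\calU_J$ in Definition~\ref{def:dec-const} is harmless and can be absorbed, e.g.\ by invoking Lemma~\ref{lem:torsion} with $l=k$ and $\gamma=\Gamma$ (which only uses Theorem~\ref{thm:main} for $k$ itself). With this replacement the bookkeeping you carried out goes through verbatim (the average over $Q$ restores the factor lost by enlarging $B$ to $Q$), and your surrounding steps --- the reduction to the critical exponent $2s=p_k$ via H\"older and Cauchy--Schwarz, the rescaling $x_j=N^{-j}y_j$, and the final Cauchy--Schwarz in $m$ --- are all correct.
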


The proof of Theorem~\ref{thm:main} in \cite{MR3548534} uses a multilinear variant of the decoupling inequality, whose proof relies crucially on (multilinear) Kakeya--Brascamp--Lieb type inequalities.
On the contrary, we will use a \emph{bilinear} variant of the decoupling inequality.
In our proof, the transversality that was captured in \cite{MR3548534} by Kakeya--Brascamp--Lieb type inequalities is instead exploited via introducing certain asymmetric bilinear decoupling constants.
Such bilinear decoupling constants are carefully designed to facilitate an efficient way of induction on the dimension $k$.
In fact, an averaging argument involving Fubini's theorem allows us to apply very neatly the uncertainty principle, and gain access to lower degree decoupling.
To sum up, instead of using Kakeya--Brascamp--Lieb type estimates, we will rely only on lower degree decoupling and H\"{o}lder inequalities in the induction step.

A related bilinear argument has been developed by Wooley in the context of Vinogradov mean value estimates; see \cite{MR3938716} and references therein.
For a comparison between Wooley's efficient congruencing approach and Bourgain-Demeter-Guth's decoupling approach, we refer the reader to \cite{MR3939285}.
In the context of decoupling inequalities, the bilinear approach was previously implemented for the parabola (case $k=2$ of Theorem~\ref{thm:main}) in \cite{arxiv:1805.10551} and the cubic moment curve in \cite{arxiv:1906.07989}.
Note, however, that the decoupling theorem proved in \cite{arxiv:1906.07989} is weaker than the $k=3$ case of Theorem~\ref{thm:main}; it follows from Theorem~\ref{thm:main} by estimating the $\ell^{2}$ sum on the right-hand side of \eqref{eq:dec-const} by an $\ell^{4}$ sum times $\delta^{-1/4}$.
Moreover, the method in \cite{arxiv:1906.07989} does not seem to work for degree $k\ge 4$.
The reason is exactly the same as why the arguments in \cite{arxiv:1512.03272} and \cite{MR3479572} do not generalize to the cases $k\ge 4$, which was explained at the end of Section 3 of \cite{arxiv:1512.03272}.
In short, if one follows the approach of \cite{arxiv:1512.03272} and \cite{MR3479572} in the case $k\ge 4$, then ``singular'' solutions to the Vinogradov system will start dominating and prevent an optimal estimate on the number of solutions.

\subsection*{Notation}
For a sequence of real numbers $(A_{\theta})_{\theta\in\Theta}$, we write $\ell^{2}_{\theta\in\Theta} A_{\theta} := \bigl( \sum_{\theta\in\Theta} \abs{A_{\theta}}^{2} \bigr)^{1/2}$. For $C > 0$ and a parallelepiped $\calU$, we will denote by $C \calU$ the parallelepiped similar to $\calU$, with the same center but $C$ times the side lengths.
For a dyadic interval $I$, we let $\Part[I]{\delta}$ be the partition of $I$ into dyadic intervals with length $2^{\lceil \log_2 \delta^{-1} \rceil}$.
If $\delta \in (0,1)$, $I$ is a dyadic interval of length $\geq \delta$, and a family of functions $(f_J)$ has been chosen so that $\supp \widehat{f_J} \subseteq \calU_J$ for every $J \in \Part[I]{\delta}$, then we will write $f_I:=\sum_{J\in \Part[I]{\delta}}f_J$.

\subsection*{Acknowledgements}
SG and ZL would like to thank the Department of Mathematics at the Chinese University of Hong Kong for their kind hospitality during their visits, where part of this work was done.
SG was supported in part by the NSF grant 1800274.
ZL was supported by NSF grant DMS-1902763.
PY was partially supported by a General Research Fund CUHK14303817 from the Hong Kong Research Grants Council, and a direct grant for research from the Chinese University of Hong Kong (4053341).
PZ was partially supported by the Hausdorff Center for Mathematics (DFG EXC 2047).
The authors would also like to thank Alan Chang, Maksym Radziwi\l{}\l{}, Jianghao Zhang, and the anonymous referee for corrections and comments improving the exposition.

\section{Passage from linear to bilinear decoupling}
\label{sect:bilinear}
The main reason allowing for the proof of decoupling inequalities in \cite{MR3374964} is that they can be reduced to multilinear inequalities by an argument introduced in Bourgain--Guth \cite{MR2860188}.
Since the moment curve is one-dimensional, and we are able to treat bilinear, rather than multilinear, inequalities, we managed to use a simpler argument based on a Whitney decomposition of the square $[0,1]^{2}$ around the diagonal.

\begin{definition}
For $\delta \in (0,1/4)$, the \emph{symmetric bilinear decoupling constant $\BilinDec(\delta)$} for the moment curve $\Gamma$ in $\hat{\R}^k$ is the smallest constant such that, for any pair of intervals $I,I' \in \Part{1/4}$ with $\dist(I,I') \geq 1/4$ and any tuple of functions $(f_{J})_{J \in \Part[I]{\delta} \cup \Part[I']{\delta}}$ with $\supp \widehat{f_{J}} \subseteq \calU_{J}$ for all $J$, the following inequality holds:
\begin{equation}
\label{eq:BilinDec}
\int_{\R^k} \abs{f_{I}}^{p_{k}/2} \abs{f_{I'}}^{p_{k}/2} \leq
\BilinDec(\delta)^{p_{k}} \Bigl[ \sum_{J \in \Part[I]{\delta}} \norm{f_J }_{p_{k}}^2 \Bigr]^{p_{k}/4}
\Bigl[ \sum_{J' \in \Part[I']{\delta}} \norm{f_{J'} }_{p_{k}}^2 \Bigr]^{p_{k}/4}.
\end{equation}
\end{definition}

\begin{lemma}[Bilinear reduction] \label{lem:bilinear}
If $\delta=2^{-N}$, then
\begin{equation}
\label{eq:bilinear-reduction}
\mc{D}_{k}(\delta)
\lesssim
\Bigl( 1 + \sum_{n=2}^{N} \BilinDec(2^{-N+n-2})^{2} \Bigr)^{1/2}.
\end{equation}
\end{lemma}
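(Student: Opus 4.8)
The plan is to bypass the usual broad--narrow iteration and instead expand
\[
\abs{f_{[0,1]}(x)}^{2}=\Bigl(\sum_{J\in\Part{\delta}}f_{J}(x)\Bigr)\overline{\Bigl(\sum_{J'\in\Part{\delta}}f_{J'}(x)\Bigr)}=\sum_{J,J'}f_{J}(x)\overline{f_{J'}(x)},
\]
sort the pairs $(J,J')$ by a Whitney decomposition of $[0,1]^{2}$ around the diagonal, and estimate throughout in $L^{p_{k}/2}$; this is legitimate since $p_{k}/2=k(k+1)/2\ge1$ is a genuine norm exponent and $\norm{f_{[0,1]}}_{p_{k}}^{2}=\norm{\abs{f_{[0,1]}}^{2}}_{p_{k}/2}$. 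For $2\le n\le N$ let $\mathcal{W}_{n}$ be the collection of ordered pairs $(I,I')$ of dyadic intervals of common length $2^{-n}$ that are \emph{non}-adjacent but have adjacent dyadic parents; the rectangles $I\times I'$ with $(I,I')\in\bigcup_{n=2}^{N}\mathcal{W}_{n}$ tile the set of $(\xi,\xi')\in[0,1]^{2}$ lying in two distinct, non-adjacent length-$\delta$ dyadic intervals. Writing $f_{I}=\sum_{J\in\Part[I]{\delta}}f_{J}$, this sorting gives the pointwise identity
\[
\abs{f_{[0,1]}}^{2}=\sum_{\substack{J=J'\ \mathrm{or}\\ J,\,J'\ \mathrm{adjacent}}}f_{J}\overline{f_{J'}}\ +\ \sum_{n=2}^{N}\sum_{(I,I')\in\mathcal{W}_{n}}f_{I}\overline{f_{I'}},
\]
in which the first sum is $\lesssim\sum_{J}\abs{f_{J}}^{2}$ (each $J$ has at most two neighbours) and the second is in modulus $\le\sum_{n}\sum_{(I,I')\in\mathcal{W}_{n}}\abs{f_{I}}\,\abs{f_{I'}}$. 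Taking $\norm{\cdot}_{p_{k}/2}$ and applying the triangle inequality over $n$ — crucially summing the $N-1$ \emph{norms themselves}, not a power of them, so no logarithmic factor appears — the lemma is reduced to proving, with $S:=\ell^{2}_{J\in\Part{\delta}}\norm{f_{J}}_{p_{k}}$, the two bounds $\bigl\|\sum_{J}\abs{f_{J}}^{2}\bigr\|_{p_{k}/2}\lesssim S^{2}$ and $\bigl\|\sum_{(I,I')\in\mathcal{W}_{n}}\abs{f_{I}}\abs{f_{I'}}\bigr\|_{p_{k}/2}\lesssim\BilinDec(2^{-N+n-2})^{2}S^{2}$ for $2\le n\le N$.

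The first bound is Minkowski's inequality, $\bigl\|\sum_{J}\abs{f_{J}}^{2}\bigr\|_{p_{k}/2}\le\sum_{J}\norm{f_{J}}_{p_{k}}^{2}=S^{2}$, and it supplies the $1$ on the right of \eqref{eq:bilinear-reduction}. For the second, fix $(I,I')\in\mathcal{W}_{n}$ and let $Q$ be the union of the two adjacent parents: an interval of length $2^{-n+2}$ whose endpoints are integer multiples of $\delta$, inside which $I$ and $I'$ are two of the four sub-intervals of length $\tfrac14\abs{Q}$ at mutual distance $\ge\tfrac14\abs{Q}$. Rescaling $Q$ affinely onto $[0,1]$ — the decoupling problem for the moment curve is invariant under the affine symmetries $\xi\mapsto c+r\xi$, and since $Q$ has $\delta$-aligned endpoints the length-$\delta$ caps inside $Q$ are carried to honest dyadic caps of length $\delta/\abs{Q}=2^{-N+n-2}$ — the definition of $\BilinDec$ gives $\int_{\R^{k}}\abs{f_{I}}^{p_{k}/2}\abs{f_{I'}}^{p_{k}/2}\le\BilinDec(2^{-N+n-2})^{p_{k}}S_{I}^{p_{k}/2}S_{I'}^{p_{k}/2}$ with $S_{I}:=\ell^{2}_{J\in\Part[I]{\delta}}\norm{f_{J}}_{p_{k}}$, i.e.\ $\norm{\abs{f_{I}}\abs{f_{I'}}}_{p_{k}/2}\le\BilinDec(2^{-N+n-2})^{2}S_{I}S_{I'}$. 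Summing over $\mathcal{W}_{n}$ with the triangle inequality and then using $S_{I}S_{I'}\le\tfrac12(S_{I}^{2}+S_{I'}^{2})$, the bounded-overlap fact that each interval of length $2^{-n}$ lies in $O(1)$ pairs of $\mathcal{W}_{n}$, and finally $\sum_{\abs{I}=2^{-n}}S_{I}^{2}=S^{2}$, yields the second bound. Combining the two bounds gives $\norm{f_{[0,1]}}_{p_{k}}^{2}\lesssim S^{2}\bigl(1+\sum_{n=2}^{N}\BilinDec(2^{-N+n-2})^{2}\bigr)$, and taking the supremum over admissible tuples proves \eqref{eq:bilinear-reduction}.

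The only delicate point is the book-keeping in the previous paragraph: one must set up the Whitney decomposition — in particular take the hull $Q$ to have length exactly four level-$n$ caps — so that, after affine rescaling, a level-$n$ rectangle realizes \emph{precisely} the configuration measured by $\BilinDec(2^{-N+n-2})$; and it is here that the hypothesis $\delta=2^{-N}$ is used, to guarantee that the length-$\delta$ caps survive the rescaling as genuine dyadic intervals. Everything else is Minkowski's inequality, the triangle inequality (used so as not to lose in the number of scales), and an elementary counting argument for $\mathcal{W}_{n}$; no Kakeya--Brascamp--Lieb input and no induction on $k$ enter this reduction.
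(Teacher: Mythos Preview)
Your proof is correct and follows essentially the same approach as the paper: expand $\abs{f_{[0,1]}}^{2}$, sort the cross terms by a Whitney decomposition of $[0,1]^{2}$ around the diagonal, apply the triangle inequality in $L^{p_{k}/2}$, handle the near-diagonal terms by Minkowski and bounded overlap, and handle each off-diagonal scale by affine rescaling to invoke the definition of $\BilinDec$. Your formulation of the Whitney cubes via ``non-adjacent children of adjacent dyadic parents'' and your explicit choice of the hull $Q$ of length $2^{-n+2}$ are cosmetic variants of the paper's iterative definition of $\calW_{n}$ and its appeal to Lemma~\ref{lem:rescale}; the substance is identical.
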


The proof of this lemma relies on affine rescaling, an idea that already underpinned the arguments in \cite{MR2860188}, \cite{MR3374964}, and \cite{MR3548534}.
The idea is based on the observation that, for any interval $I = [a,a+\kappa]$, the affine map $A_I \colon \hat{\R}^k \to \hat{\R}^k$, defined by
\[
\bigl( A_I (\eta_{1},\dotsc,\eta_{k}) \bigr)_{j} := \sum_{j'=0}^{k} \binom{j}{j'} a^{j-j'} \kappa^{j'} \eta_{j'},
\quad 1\leq j\leq k,
\]
where, by convention, $\eta_{0}=1$, satisfies $A_I \Gamma(t) = \Gamma(a + t \kappa)$ for all $t\in\R$, and hence
\[
(DA_I) \partial^i \Gamma(t) = \kappa^i (\partial^i \Gamma)(a+t\kappa) \quad \text{for all $i\geq 1$ and $t\in\R$}.
\]
It follows that, for dyadic intervals $I,J$ with $J \subseteq I \subseteq [0,1]$, we have
\[
A_I^{-1} \calU_{J} = \calU_{J_I},
\]
where $J_I := \kappa^{-1} (J - a)$ if $I = [a,a+\kappa]$. This implies

\begin{lemma}
[Affine rescaling]
\label{lem:rescale}
Let $I \in \Part{2^{-n}}$ for some integer $n \geq 0$.
For any $\delta \in (0,2^{-n})$ and any tuple of functions $(f_J)_{J \in \Part[I]{\delta}}$ with $\supp \widehat{f_J} \subseteq \calU_{J}$ for all $J$, the following inequality holds:
\begin{equation} \label{eq:lin_rescale}
\norm{ f_{I} }_{p_k}
\leq
\Dec_k(2^{n} \delta) \Bigl( \sum_{J \in \Part[I]{\delta}} \norm[\big]{ f_{J} }_{p_k}^{2} \Bigr)^{1/2}.
\end{equation}
Similarly, let $I$, $I' \in \Part{2^{-n}}$ for some integer $n \geq 2$ with $2^n \dist(I,I') \in \Set{1,2}$.
For any $\delta \in (0,2^{-n})$ and any tuple of functions $(f_J)_{J \in \Part[I]{\delta} \cup \Part[I']{\delta}}$ with $\supp \widehat{f_J} \subseteq \calU_{J}$ for all $J$, the following inequality holds:
\begin{equation} \label{eq:bilin_rescale}
\int_{\R^k} \abs{f_{I}}^{p_{k}/2} \abs{f_{I'}}^{p_{k}/2} \leq
\BilinDec(2^{n-2} \delta)^{p_{k}} \Bigl[ \sum_{J \in \Part[I]{\delta}} \norm{f_J }_{p_{k}}^2 \Bigr]^{p_{k}/4}
\Bigl[ \sum_{J' \in \Part[I']{\delta}} \norm{f_{J'} }_{p_{k}}^2 \Bigr]^{p_{k}/4}.
\end{equation}
\end{lemma}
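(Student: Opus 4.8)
The plan is to deduce both inequalities from the affine rescaling structure recorded above, namely \eqref{eq:lin_rescale} from the inequality defining $\Dec_{k}(2^{n}\delta)$ and \eqref{eq:bilin_rescale} from the one defining $\BilinDec(2^{n-2}\delta)$. In each case I would transport the functions $f_{J}$ to the rescaled configuration by the change of variables dual to a suitable affine map; the only bookkeeping point is that this transportation multiplies both sides of the relevant inequality by one and the same (Jacobian-type) factor, which then cancels.

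For \eqref{eq:lin_rescale} I would write $I=[a,a+\kappa]$ with $\kappa=2^{-n}$, and for each $J\in\Part[I]{\delta}$ let $g_{J_{I}}$ be defined by $\widehat{g_{J_{I}}}=\widehat{f_{J}}\circ A_{I}$, so that $g_{J_{I}}$ is, up to a positive constant depending only on $DA_{I}$, the function $f_{J}$ composed with the linear map $(DA_{I})^{-\mathsf{T}}$ and multiplied by the character $e\bigl(-\,\cdot\,(DA_{I})^{-1}\Gamma(a)\bigr)$. Then $\supp\widehat{g_{J_{I}}}=A_{I}^{-1}\supp\widehat{f_{J}}\subseteq A_{I}^{-1}\calU_{J}=\calU_{J_{I}}$, where $J_{I}=\kappa^{-1}(J-a)$ as above; as $J$ ranges over $\Part[I]{\delta}$ the interval $J_{I}$ ranges over $\Part{2^{n}\delta}$; and $\sum_{J}g_{J_{I}}$ is $f_{I}$ transported by this same operation, so $\norm{g_{J_{I}}}_{p_{k}}=\lambda_{I}\norm{f_{J}}_{p_{k}}$ and $\norm[\big]{\sum_{J}g_{J_{I}}}_{p_{k}}=\lambda_{I}\norm{f_{I}}_{p_{k}}$ with the same constant $\lambda_{I}>0$. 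Applying the inequality defining $\Dec_{k}(2^{n}\delta)$ to $(g_{J_{I}})_{J_{I}\in\Part{2^{n}\delta}}$ and dividing by $\lambda_{I}$ yields \eqref{eq:lin_rescale}.

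For \eqref{eq:bilin_rescale} I would run the same argument with $A_{I}$ replaced by $A_{\tilde I}$ for a well-chosen interval $\tilde I$ of length $2^{-(n-2)}$. Since $I,I'\in\Part{2^{-n}}$ and $2^{n}\dist(I,I')\in\{1,2\}$, the indices of $I$ and $I'$ in $\Part{2^{-n}}$ differ by $2$ or $3$, so one can take $\tilde I$ to be a union of four consecutive length-$2^{-n}$ dyadic intervals that is contained in $[0,1]$ and contains both $I$ and $I'$ (the block ending at $1$ in the boundary case where $I'$ touches the right endpoint of $[0,1]$). The rescaling identity $A_{\tilde I}^{-1}\calU_{J}=\calU_{J_{\tilde I}}$ persists for this (possibly non-dyadic) $\tilde I$ because its endpoints lie on the $2^{-n}$-grid, hence the rescaled intervals stay dyadic; under the rescaling $I$ and $I'$ become two distinct members of $\Part{1/4}$ at distance $2^{n-2}\dist(I,I')\in\{1/4,1/2\}\ge 1/4$, and the scale $\delta$ becomes $2^{n-2}\delta<1/4$. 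Transporting the functions $f_{J}$, $J\in\Part[I]{\delta}\cup\Part[I']{\delta}$, via $A_{\tilde I}$ as above then meets all the hypotheses in the definition of $\BilinDec(2^{n-2}\delta)$, and since this common operation multiplies both sides of \eqref{eq:BilinDec} by the same positive factor, dividing it out gives \eqref{eq:bilin_rescale}.

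All the scaling identities used here are one-line computations, so I expect the only slightly delicate points to be (a) in the bilinear step, exhibiting the rescaling interval $\tilde I$ together with its boundary case, and checking that $A_{\tilde I}^{-1}\calU_{J}=\calU_{J_{\tilde I}}$ survives the relaxation to non-dyadic $\tilde I$; and (b) confirming that the index correspondences $\{J_{I}:J\in\Part[I]{\delta}\}=\Part{2^{n}\delta}$ and its bilinear analogue are exact bijections, which amounts to a short compatibility check for the ceilings appearing in the definition of the dyadic partitions. Neither is a genuine conceptual obstacle.
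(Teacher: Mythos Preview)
Your proposal is correct and follows essentially the same approach as the paper: define $g_{K}$ via $\widehat{g_{K}}=\widehat{f_{J}}\circ A_{I}$, apply the defining inequality of $\Dec_{k}(2^{n}\delta)$ (resp.\ $\BilinDec(2^{n-2}\delta)$) to the transported tuple, and cancel the common Jacobian factor arising from the change of variables. The paper in fact omits the bilinear argument entirely (``A similar argument proves \eqref{eq:bilin_rescale}, which we omit''), so your explicit choice of a length-$2^{-(n-2)}$ interval $\tilde I$ containing $I\cup I'$, together with the verification that the rescaled intervals land in $\Part{1/4}$ at distance $\geq 1/4$ and that $J_{\tilde I}$ remains dyadic even when $\tilde I$ is not, correctly supplies the details the paper leaves implicit.
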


\begin{proof}
To prove \eqref{eq:lin_rescale}, suppose that $I = [a,a+2^{-n}]$.
For $J \in \Part[I]{\delta}$ and $K=J_{I} \in \Part{2^{n}\delta}$, let the function $g_{K}$ be such that $\widehat{f_{J}} \circ A_{I} = \widehat{g_{K}}$.
Applying \eqref{eq:dec-const} to $(g_K)$ in place of $(f_J)$, and changing variables on both sides, we obtain \eqref{eq:lin_rescale}.
A similar argument proves \eqref{eq:bilin_rescale}, which we omit.
\end{proof}

\begin{proof}
[Proof of Lemma~\ref{lem:bilinear}]
Suppose that $\delta=2^{-N}$.
Set $\calW_1:=\emptyset$.
For integers $n\geq 2$, define iteratively
\[
\calW_{n} := \Set[\Big]{ (I_{1},I_{2}) \in \Part{2^{-n}} \given 2^{n}\dist(I_{1},I_{2}) \in \Set{1,2} \text{ and } I_1\times I_2 \not\subset \bigcup_{(I'_1, I'_2)\in \calW_{n-1}}I'_1\times I'_2}.
\]
These are the squares of scale $2^{-n}$ in the Whitney decomposition of the unit square around the diagonal.
Let also
\[
\widetilde{\calW}_{n} := \Set{ (I_{1},I_{2}) \in \Part{2^{-n}} \given \dist(I_{1},I_{2}) = 0}
\]
be the squares of scale $2^{-n}$ that touch the diagonal.
For $N\geq 2$, let
\[
\calW^{N} := \bigcup_{n=2}^{N} \calW_{n} \cup \widetilde{\calW}_{N},
\]
so that the squares $I_{1}\times I_{2}$ with $(I_{1},I_{2})\in\calW^{N}$ form an essentially disjoint (up to boundaries) covering of $[0,1]^{2}$.
Let $(f_{J})_{J \in \Part{\delta}}$ be as in Definition~\ref{def:dec-const} for $\Dec_k(\delta)$. Then
\begin{equation}
\label{eq:1}
\begin{split}
\norm{ \sum_{J \in \Part{\delta}} f_J }_{p_k}
&=
\norm{ \sum_{(I, I') \in \calW^{N}} f_{I}\ov{f_{I'}}}_{p_{k}/2}^{1/2}\leq
\Bigl( \sum_{(I, I') \in \calW^{N}} \norm{ f_{I}\ov{f_{I'}}}_{p_{k}/2} \Bigr)^{1/2}\\
&\leq
\Bigl( \sum_{(I, I') \in \widetilde{\calW}_{N}} \norm{f_{I}}_{p_{k}} \norm{f_{I'}}_{p_{k}}
+ \sum_{n=2}^{N} \sum_{(I, I') \in \calW_{n}} \norm{ f_{I}\ov{f_{I'}}}_{p_{k}/2}\Bigr)^{1/2}.
\end{split}
\end{equation}
We estimate the first term by
\begin{align*}
\sum_{(I, I') \in \widetilde{\calW}_{N}} (\norm{f_{I}}_{p_{k}}^{2} + \norm{f_{I'}}_{p_{k}}^{2}) \leq
6 \sum_{I \in \Part{2^{-N}}} \norm{f_{I}}_{p_{k}}^{2},
\end{align*}
since each $I$ appears at most $6$ times in the pairs $\widetilde{\calW}_{N}$.
In the second term, by affine rescaling \eqref{eq:bilin_rescale}, for every $(I,I')\in\calW_{n}$, we have
\begin{align*}
\norm{ f_{I}\ov{f_{I'}}}_{p_{k}/2}
& \lesssim
\BilinDec(2^{-N+n-2})^{2}
\bigl( \ell^{2}_{J\in\Part[I]{2^{-N}}} \norm{ f_{J}}_{p_{k}} \bigr)
\bigl( \ell^{2}_{J'\in\Part[I']{2^{-N}}} \norm{ f_{J'}}_{p_{k}} \bigr)
\\ &\lesssim
\BilinDec(2^{-N+n-2})^{2}
\Bigl( \bigl( \ell^{2}_{J\in\Part[I]{2^{-N}}} \norm{ f_{J}}_{p_{k}} \bigr)^{2} +
\bigl( \ell^{2}_{J'\in\Part[I']{2^{-N}}} \norm{ f_{J'}}_{p_{k}} \bigr)^{2} \Bigl).
\end{align*}
Since each $I \in \Part{2^{-n}}$ appears at most $8$ times in $\calW_{n}$, it follows that
\begin{align*}
\sum_{(I, I') \in \calW_{n}} \norm{ f_{I}\ov{f_{I'}}}_{p_{k}/2}
&\lesssim
\BilinDec(2^{-N+n-2})^{2}
\bigl( \ell^{2}_{J\in\Part{2^{-N}}} \norm{ f_{J}}_{p_{k}} \bigr)^{2}.
\end{align*}
Inserting these bounds in \eqref{eq:1}, we obtain the desired estimate.
\end{proof}

\section{Lower degree decoupling} 
In this section, we first introduce $k$ new asymmetric bilinear decoupling constants for the moment curve in $\hat{\R}^k$, and relate them to the symmetric ones in Section~\ref{sect:bilinear} (Lemma~\ref{lem:BilinDec_trivial_first_step}).
We then show how these new asymmetric bilinear constants can be bounded efficiently via decoupling for moment curves of degrees $< k$ (Lemma~\ref{lem:BI}).
The key is certain transversality as displayed in Lemma~\ref{lem:transversality}.
Lemma~\ref{lem:BI} will allow us to prove Theorem~\ref{thm:main} in Section~\ref{sect:induction}, by induction on $k$.

\subsection{Asymmetric bilinear decoupling constants}\label{sect:asym_bilinear}
For a dyadic interval $I$, let $\Uspace_{I}$ denote the parallelepiped centered at the origin polar to $\calU_{I}$, that is,
\[
\Uspace_{I} := \Set{ x \in \R^k \given \abs{\langle x, \partial^i \Gamma(c_I) \rangle} \leq \abs{I}^{-i}, 1 \leq i \leq k}.
\]
It is a parallelepiped of dimension $\sim \abs{I}^{-1}\times \abs{I}^{-2}\times \dotsm \times \abs{I}^{-k}$.
Let
\[
\phi_{I}(x) := \abs{\Uspace_{I}}^{-1} \inf\Set{t\geq 1 \given x/t \in \Uspace_{I}}^{-A}
\]
where $A$ is a dimensional constant satisfying $A > k$ and $A \geq \frac{k(k+1)}{2}$. 
$\phi_{I}$ is an $L^{1}$ normalized positive bump function adapted to $\Uspace_{I}$. 
The power $A$ was chosen so that Lemma~\ref{lem:convolution} holds.

\begin{definition}
\label{def:BilinDec_l}
For $l \in \Set{0,\dotsc,k-1}$, $a,b \in [0,1]$ and $\delta \in (0,1)$, the \emph{(asymmetric) bilinear decoupling constant} $\BilinDec_{l, a, b}(\delta)$ for the moment curve $\Gamma$ in $\hat{\R}^k$ is the smallest constant such that, for all pairs of intervals $I \in \Part{\delta^a}$, $I' \in \Part{\delta^b}$ with $\dist(I, I') \ge 1/4$ and all tuples of functions $(f_{J})_{J \in \Part[I]{\delta} \cup \Part[I']{\delta}}$ with $\supp \widehat{f_{J}} \subseteq \calU_{J}$ for all $J$, the following inequality holds:
\begin{equation}
\label{eq:BilinDec_l}
\begin{split}
& \int_{\R^k}\big( \abs{f_{I}}^{p_{l}}*\phi_{I} \big) \big( \abs{f_{I'}}^{p_{k}-p_{l}}* \phi_{I'}\big)\\
&\leq \BilinDec_{l, a, b}(\delta)^{p_{k}}\Bigl[ \sum_{J \in \Part[I]{\delta}} \norm{f_J }_{p_{k}}^2 \Bigr]^{p_{l}/2} \Bigl[ \sum_{J' \in \Part[I']{\delta}} \norm{f_{J'} }_{p_{k}}^2 \Bigr]^{(p_{k}-p_{l})/2}.
\end{split}
\end{equation}
\end{definition}

\begin{remark}
In the case $l=0$, the bilinear decoupling constant $\BilinDec_{0,a,b}(\delta)$ clearly does not depend on $a$, and in fact, by affine rescaling \eqref{eq:lin_rescale}, we have
\begin{equation}
\label{eq:BilinDec_0}
\BilinDec_{0,a,b}(\delta) \sim \Dec_{k}(\delta^{1-b}).
\end{equation}
In order to avoid case distinction in \eqref{eq:Holder} and thereafter, we do not require $a$ in the notation $\BilinDec_{0,a,b}(\delta)$ to be well-defined.
\end{remark}

Our choice of the left hand side of \eqref{eq:BilinDec_l} is partly motivated by the following uncertainty principle.
\begin{lemma}[Uncertainty Principle]
\label{lem:uncertainty}
For $p \in [1,\infty)$ and $J \subset [0, 1]$, we have
\[
\abs{g_{J}}^{p}
\lesssim_{p}
\abs{g_{J}}^{p} * \phi_{J},
\]
for every $g_J$ with $\supp \widehat{g_J} \subseteq C \calU_J$.
\end{lemma}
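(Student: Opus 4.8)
The plan is to make quantitative the uncertainty heuristic that a function whose Fourier transform is supported in $C\calU_J$ is essentially constant on translates of the polar parallelepiped $\Uspace_J$; I would realize this through a reproducing convolution identity followed by Jensen's inequality. The one genuine ingredient is the construction of a suitable bump.

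\emph{A bump adapted to $\Uspace_J$.} First I would fix once and for all a Schwartz function $\Phi_0$ on $\R^k$ whose Fourier transform is smooth, supported in the cube $[-1,1]^k$, and identically $1$ on $[-\tfrac12,\tfrac12]^k$; thus $\abs{\Phi_0(x)}\lesssim_N(1+\abs{x})^{-N}$ for all $N$. Since $J\subseteq[0,1]$ we have $\abs{\partial^i\Gamma(c_J)}\sim_k 1$, so the parallelepipeds $\calU_J$ (centered at $\Gamma(c_J)$) and $\Uspace_J$ (centered at the origin) are polar to each other up to absolute constants, with $\abs{\calU_J}\,\abs{\Uspace_J}\sim 1$. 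Hence there is an invertible linear map $L_J$ with $L_J\bigl([-\tfrac12,\tfrac12]^k\bigr)=C\calU_J-\Gamma(c_J)$. Setting $\widehat{\Phi_J}(\xi):=\widehat{\Phi_0}\bigl(L_J^{-1}(\xi-\Gamma(c_J))\bigr)$, one has $\widehat{\Phi_J}\equiv 1$ on $C\calU_J$, while on the spatial side
\[
\Phi_J(x)=\abs{\det L_J}\,e^{2\pi i\Gamma(c_J)\cdot x}\,\Phi_0\bigl(L_J^{\mathsf{T}}x\bigr).
\]
Now $\abs{\det L_J}\sim\abs{\calU_J}\sim\abs{\Uspace_J}^{-1}$, and $L_J^{\mathsf{T}}$ maps a parallelepiped comparable to $\Uspace_J$ onto $[-1,1]^k$, so that $\abs{L_J^{\mathsf{T}}x}$ is comparable to the Minkowski gauge of $x$ relative to $\Uspace_J$ whenever the latter is $\ge 1$. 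Feeding this into the Schwartz decay of $\Phi_0$ with exponent $10k$ yields the pointwise bound
\[
\abs{\Phi_J(x)}\lesssim\abs{\Uspace_J}^{-1}\Bigl(\inf\Set{t\ge 1\given x/t\in\Uspace_J}\Bigr)^{-10k}=\phi_J(x),
\]
and integrating (the tail is summable because $10k>k$) also $\norm{\Phi_J}_{L^1(\R^k)}\lesssim\norm{\phi_J}_{L^1(\R^k)}\lesssim 1$.

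\emph{Conclusion via Jensen.} Since $\supp\widehat{g_J}\subseteq C\calU_J$ and $\widehat{\Phi_J}\equiv 1$ there, $g_J=g_J*\Phi_J$. For $p\in[1,\infty)$, applying Jensen's inequality to $t\mapsto t^p$ against the probability measure $\norm{\Phi_J}_{L^1}^{-1}\abs{\Phi_J(x-y)}\,\dif y$ gives
\[
\abs{g_J(x)}^p=\Bigl|\int g_J(y)\,\Phi_J(x-y)\,\dif y\Bigr|^p\le\norm{\Phi_J}_{L^1}^{\,p-1}\int\abs{g_J(y)}^p\,\abs{\Phi_J(x-y)}\,\dif y\lesssim_p\bigl(\abs{g_J}^p*\phi_J\bigr)(x),
\]
where in the last step I use $\norm{\Phi_J}_{L^1}\lesssim 1$ and $\abs{\Phi_J}\lesssim\phi_J$. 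This is the asserted inequality.

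\emph{Main obstacle.} The only point requiring genuine care is the construction in the second paragraph: verifying that the affine transplant really produces a bump with Fourier support in $C\calU_J$ whose spatial decay is governed by the gauge of $\Uspace_J$ with the correct $L^1$ normalization. This rests on the polar duality $\Uspace_J\sim(\calU_J-\Gamma(c_J))^{\circ}$ (equivalently $\abs{\calU_J}\,\abs{\Uspace_J}\sim 1$) and on the elementary fact that an affine isomorphism carrying one convex body to another carries polar body to polar body; once that is in place, the estimate on $\Phi_J$ is merely the Schwartz decay of the fixed function $\Phi_0$, and the reproducing identity together with the Jensen step are routine.
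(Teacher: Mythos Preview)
Your proof is correct and follows essentially the same approach as the paper's: construct a Schwartz bump $\psi$ with $\widehat\psi\equiv 1$ on $C\calU_J$ and $\abs{\psi}\lesssim\phi_J$, then use the reproducing identity $g_J=g_J*\psi$ together with H\"older/Jensen. The paper simply asserts the existence of such a $\psi$ and writes the Jensen step in H\"older form, whereas you supply the explicit affine construction and the gauge decay verification; otherwise the two arguments coincide.
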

\begin{proof}
Let $\psi$ be a Schwartz function adapted to $\Uspace_{J}$ such that $\widehat{\psi}\equiv 1$ on $C \calU_J$ and $\int \abs{\psi} \approx 1$.
Then $g_{J} = g_{J}*\psi$, so
\begin{align*}
\abs{g_{J}}^{p}(x)
&\leq
\Bigl( \int \abs{g_{J}(x-z)}^{p} \abs{\psi(z)} \dif z \Bigr)
\Bigl( \int \abs{\psi(z)} \dif z \Bigr)^{p/p'}
\numberthis\label{eq:8}
\\ &\lesssim
(\abs{g_{J}}^{p} * \abs{\psi})(x)
\lesssim
(\abs{g_{J}}^{p} * \phi_{J})(x).
\qedhere
\end{align*}
\end{proof}
The first application of Lemma~\ref{lem:uncertainty} is that the symmetric bilinear decoupling constants \eqref{eq:BilinDec} can be bounded (rather crudely) by the asymmetric ones \eqref{eq:BilinDec_l}.

\begin{lemma}
\label{lem:BilinDec_trivial_first_step}
For every $l \in \Set{0,\dotsc,k-1}$, $a,b \in [0,1]$ and $\delta \in (0,1/4)$, we have
\begin{equation}
\label{eq:BilinDec_trivial_first_step}
\BilinDec(\delta)
\lesssim
\delta^{-a p_{l}/p_{k}} \delta^{-b (p_{k}-p_{l})/p_{k}} \BilinDec_{l, a, b}(\delta).
\end{equation}
\end{lemma}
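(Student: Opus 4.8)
The plan is to relate the symmetric constant $\BilinDec(\delta)$ directly to the asymmetric one $\BilinDec_{l,a,b}(\delta)$ by a crude interpolation-type estimate, exploiting that the integrand $\abs{f_I}^{p_k/2}\abs{f_{I'}}^{p_k/2}$ in \eqref{eq:BilinDec} and the integrand $\bigl(\abs{f_I}^{p_l}*\phi_I\bigr)\bigl(\abs{f_{I'}}^{p_k-p_l}*\phi_{I'}\bigr)$ in \eqref{eq:BilinDec_l} both decompose $p_k$ as a sum of exponents, and that Hölder lets us pass between the two weightings. The key tools are the uncertainty principle (Lemma~\ref{lem:uncertainty}) to replace bare powers of $f_I$, $f_{I'}$ by convolutions with the bump functions $\phi_I$, $\phi_{I'}$, and Hölder's inequality to balance the exponents.

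First I would fix a pair $I,I'\in\Part{1/4}$ with $\dist(I,I')\ge 1/4$ as in the definition of $\BilinDec(\delta)$, and a tuple $(f_J)$ with $\supp\widehat{f_J}\subseteq\calU_J$. Since $I,I'$ have length $\sim 1/4$, the intervals $I$, $I'$ are (up to harmless constants) also among the intervals $\Part{\delta^a}$, $\Part{\delta^b}$ after splitting each into $O_{a,b}(1)$ pieces of the required length — here is where the factors $\delta^{-ap_l/p_k}$ and $\delta^{-b(p_k-p_l)/p_k}$ come from: subdividing $I$ into $\sim\delta^{-a}$ subintervals and $I'$ into $\sim\delta^{-b}$ subintervals, and applying the triangle inequality together with Hölder in the number of pieces, costs a power of $\delta$ on the $\ell^2$ side. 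Concretely, I would write $f_I=\sum_{\tilde I} f_{\tilde I}$ over $\tilde I\in\Part[I]{\delta^a}$ (and similarly for $f_{I'}$), expand $\abs{f_I}^{p_k/2}\abs{f_{I'}}^{p_k/2}$, and reduce to estimating $\int \abs{f_{\tilde I}}^{p_k/2}\abs{f_{\tilde I'}}^{p_k/2}$ for a single pair $\tilde I\in\Part{\delta^a}$, $\tilde I'\in\Part{\delta^b}$ still at distance $\gtrsim 1/4$ (possibly after a further trivial splitting to ensure this).

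For a single such pair, I would apply the uncertainty principle: $\abs{f_{\tilde I}}^{p_k/2}\lesssim \abs{f_{\tilde I}}^{p_k/2}*\phi_{\tilde I}$ is not quite the right shape, so instead I would use Hölder directly on the integral $\int \abs{f_{\tilde I}}^{p_k/2}\abs{f_{\tilde I'}}^{p_k/2}$, splitting the two factors with exponents adapted to $p_l$ and $p_k-p_l$. Namely write $\abs{f_{\tilde I}}^{p_k/2}\abs{f_{\tilde I'}}^{p_k/2} = \bigl(\abs{f_{\tilde I}}^{p_l}\bigr)^{1/2}\bigl(\abs{f_{\tilde I'}}^{p_k-p_l}\bigr)^{1/2}\bigl(\abs{f_{\tilde I}}^{p_k-p_l}\bigr)^{1/2}\bigl(\abs{f_{\tilde I'}}^{p_l}\bigr)^{1/2}$; by symmetry of $I,I'$ it suffices to bound $\int (\abs{f_{\tilde I}}^{p_l})^{1/2}(\abs{f_{\tilde I'}}^{p_k-p_l})^{1/2}\cdot(\text{analogous factor})$, then apply Cauchy--Schwarz to separate into $\int\abs{f_{\tilde I}}^{p_l}\abs{f_{\tilde I'}}^{p_k-p_l}$ and the swapped version, each of which after invoking Lemma~\ref{lem:uncertainty} to insert $*\phi_{\tilde I}$, $*\phi_{\tilde I'}$ is controlled by $\BilinDec_{l,a,b}(\delta)^{p_k}$ times the appropriate product of $\ell^2$-sums (using \eqref{eq:BilinDec_l} for the pair $(\tilde I,\tilde I')$ and once more with roles of $l$ and $k-l$, i.e. the symmetry $\BilinDec_{l,a,b}\sim\BilinDec_{k-l,b,a}$ built into the definition when we swap). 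The $\ell^2$-sums over $\Part[\tilde I]{\delta}$, $\Part[\tilde I']{\delta}$ are dominated by the full sums over $\Part{\delta}$, and combining the $\delta^{a},\delta^{b}$-many pieces via Hölder produces exactly the claimed powers $\delta^{-ap_l/p_k}$ and $\delta^{-b(p_k-p_l)/p_k}$.

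The main obstacle I anticipate is bookkeeping the exponents and the counting of pieces cleanly: one must make sure that after the two subdivisions and the Hölder step in the number of pieces, the exponent on $\delta$ is precisely $ap_l/p_k+b(p_k-p_l)/p_k$ and not something larger, which requires using the $\ell^2$ (rather than $\ell^1$) structure on the right-hand side and distributing the Hölder weights so that the "$p_l$ part" sees only the $\delta^a$ pieces and the "$p_k-p_l$ part" sees only the $\delta^b$ pieces. A secondary technical point is that the convolutions $\phi_{\tilde I}$, $\phi_{\tilde I'}$ are adapted to $\Uspace_{\tilde I}$, $\Uspace_{\tilde I'}$ which are slightly smaller than $\Uspace_I$, $\Uspace_{I'}$; since $\tilde I\subseteq I$ this only makes the bump narrower, so Lemma~\ref{lem:uncertainty} applies verbatim (the support condition $\supp\widehat{f_J}\subseteq C\calU_J\subseteq C\calU_{\tilde I}$ for $J\subseteq\tilde I$ is what is needed), and no genuine difficulty arises there.
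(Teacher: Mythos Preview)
Your overall strategy matches the paper's: apply H\"older/Cauchy--Schwarz to pass from the symmetric exponent split $(p_k/2,p_k/2)$ to the asymmetric split $(p_l,p_k-p_l)$, subdivide into pieces at scales $\delta^a$ and $\delta^b$, invoke the uncertainty principle (Lemma~\ref{lem:uncertainty}) to insert the convolutions, and then quote Definition~\ref{def:BilinDec_l}. However, the \emph{order} in which you carry out these two steps is reversed, and this creates a genuine gap.

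You subdivide first (so $\tilde I\in\Part{\delta^a}$, $\tilde I'\in\Part{\delta^b}$) and then apply Cauchy--Schwarz to the single pair $(\tilde I,\tilde I')$. The first factor $\int\abs{f_{\tilde I}}^{p_l}\abs{f_{\tilde I'}}^{p_k-p_l}$ is indeed controlled by $\BilinDec_{l,a,b}(\delta)$. But the swapped factor $\int\abs{f_{\tilde I}}^{p_k-p_l}\abs{f_{\tilde I'}}^{p_l}$ has the exponent $p_l$ sitting on the interval at scale $\delta^b$, so by Definition~\ref{def:BilinDec_l} it is controlled by $\BilinDec_{l,b,a}(\delta)$, not $\BilinDec_{l,a,b}(\delta)$. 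Your appeal to a ``symmetry $\BilinDec_{l,a,b}\sim\BilinDec_{k-l,b,a}$'' does not help: note that $p_k-p_l=(k-l)(k+l+1)\neq (k-l)(k-l+1)=p_{k-l}$, so the swapped integrand is not of the form in Definition~\ref{def:BilinDec_l} for index $k-l$ either. Thus your argument as written yields only
\[
\BilinDec(\delta)\lesssim \delta^{-C(a+b)}\,\BilinDec_{l,a,b}(\delta)^{1/2}\BilinDec_{l,b,a}(\delta)^{1/2},
\]
which is not the stated lemma.

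The fix is simple and is exactly what the paper does: apply Cauchy--Schwarz \emph{before} subdividing, while $I,I'$ are still both at scale $1/4$ and hence interchangeable. One obtains
\[
\int\abs{f_I}^{p_k/2}\abs{f_{I'}}^{p_k/2}\le\Bigl(\int\abs{f_I}^{p_l}\abs{f_{I'}}^{p_k-p_l}\Bigr)^{1/2}\Bigl(\int\abs{f_I}^{p_k-p_l}\abs{f_{I'}}^{p_l}\Bigr)^{1/2},
\]
and \emph{then} subdivides each bracket separately: in the first bracket put $I$ at scale $\delta^a$ and $I'$ at scale $\delta^b$, while in the second bracket (by the $I\leftrightarrow I'$ symmetry) put $I'$ at scale $\delta^a$ and $I$ at scale $\delta^b$. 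Both brackets are then bounded by the \emph{same} constant $\BilinDec_{l,a,b}(\delta)$, and the H\"older loss in the number of pieces is $\delta^{-a(p_l-1)}\delta^{-b(p_k-p_l-1)}$, which after taking the $p_k$-th root gives (slightly better than) the stated power.
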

\begin{proof}
Let $I,I' \in \Part{1/4}$ with $\dist(I,I') \geq 1/4$.
Let $(f_{K})_{K \in \Part[I]{\delta} \cup \Part[I']{\delta}}$ be a tuple of functions with $\supp \widehat{f_{K}} \subseteq \calU_{K}$ for all $K$.
By H\"older's inequality, we have
\begin{equation} \label{eq:Holdersym}
\int_{\R^k} \abs{f_{I}}^{p_{k}/2} \abs{f_{I'}}^{p_{k}/2}
\leq
\Bigl( \int_{\R^k} \abs{f_{I}}^{p_{l}} \abs{f_{I'}}^{p_{k}-p_{l}} \Bigr)^{1/2}
\Bigl( \int_{\R^k} \abs{f_{I}}^{p_{k}-p_{l}} \abs{f_{I'}}^{p_{l}} \Bigr)^{1/2}.
\end{equation}
By symmetry, it suffices to estimate the first bracket.
Assume that $l\neq 0$; the case $l=0$ is similar, but easier, since the term with power $p_{l}$ disappears.
We have
\begin{align*}
\int_{\R^k} \abs{f_{I}}^{p_{l}} \abs{f_{I'}}^{p_{k}-p_{l}}
&\leq
\int_{\R^k} \bigl(\sum_{J\in\Part[I]{\delta^{a}}} \abs{f_{J}} \bigr)^{p_{l}} \bigl(\sum_{J'\in\Part[I']{\delta^{b}}} \abs{f_{J'}} \bigr)^{p_{k}-p_{l}}
\\ &\leq
\abs{\Part[I]{\delta^{a}}}^{p_{l}-1} \abs{\Part[I']{\delta^{b}}}^{p_{k}-p_{l}-1}
\sum_{J\in\Part[I]{\delta^{a}}} \sum_{J'\in\Part[I']{\delta^{b}}}
\int_{\R^k} \abs{f_{J}}^{p_{l}} \abs{f_{J'}}^{p_{k}-p_{l}}.
\end{align*}
By Lemma~\ref{lem:uncertainty} and Definition~\ref{eq:BilinDec_l}, we have
\begin{align*}
\int_{\R^k} \abs{f_{J}}^{p_{l}} \abs{f_{J'}}^{p_{k}-p_{l}}
&\lesssim
\int_{\R^k} (\abs{f_{J}}^{p_{l}} * \phi_{J}) (\abs{f_{J'}}^{p_{k}-p_{l}} * \phi_{J'})
\\ &\leq
\BilinDec_{l, a, b}(\delta)^{p_{k}}
\Bigl[ \ell^{2}_{K \in \Part[J]{\delta}} \norm{f_{K} }_{p_{k}} \Bigr]^{p_{l}}
\Bigl[ \ell^{2}_{K' \in \Part[J']{\delta}} \norm{f_{K'} }_{p_{k}} \Bigr]^{p_{k}-p_{l}}.
\end{align*}
Inserting this into the previous display, and using $\ell^2 \hookrightarrow \ell^{p_l}, \ell^{p_k-p_l}$, we obtain
\begin{multline*}
\int_{\R^k} \abs{f_{I}}^{p_{l}} \abs{f_{I'}}^{p_{k}-p_{l}}
\lesssim
\delta^{-a(p_{l}-1)} \delta^{-b(p_{k}-p_{l}-1)} \BilinDec_{l, a, b}(\delta)^{p_{k}}
\\ \cdot
\Bigl[ \ell^{2}_{K \in \Part[I]{\delta}} \norm{f_{K} }_{p_{k}} \Bigr]^{p_{l}}
\Bigl[ \ell^{2}_{K' \in \Part[I']{\delta}} \norm{f_{K'} }_{p_{k}} \Bigr]^{p_{k}-p_{l}}.
\end{multline*}
Together with a similar estimate for the second factor in \eqref{eq:Holdersym}, we obtain the desired estimate.
\end{proof}

\subsection{Transversality}
Let $V^{(l)}(\xi)$ denote the $l$-th order tangent space to the moment curve $\Gamma$ at the point $\xi$, that is,
\[
V^{(l)}(\xi) := \lin (\partial^{1}\Gamma(\xi), \dotsc, \partial^{l}\Gamma(\xi) ).
\]
The main geometric observation that makes our inductive argument work is that the spaces $V^{(l)}(\xi_{1})$ and $V^{(k-l)}(\xi_{2})$ are transverse for any $l \in \Set{1,\dotsc,k-1}$, as long as $\xi_{1}\neq \xi_{2}$.
This transversality is made quantitative in the following result. It follows from the generalized Vandermonde determinant formula in \cite[Equation (14)]{MR729034}; we include a proof for completeness.
\begin{lemma}
\label{lem:transversality}
For any integers $0 \leq l \leq k$ and any $\xi_{1},\xi_{2} \in \R$, we have
\begin{equation}
\label{eq:transversality}
\abs[\Big]{ \partial^{1}\Gamma(\xi_{1}) \wedge \dotsb \wedge \partial^{l}\Gamma(\xi_{1})
\wedge
\partial^{1}\Gamma(\xi_{2}) \wedge \dotsb \wedge \partial^{k-l}\Gamma(\xi_{2}) }
\gtrsim_{k,l} \abs{\xi_{1}-\xi_{2}}^{l(k-l)}.
\end{equation}
\end{lemma}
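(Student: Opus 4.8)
The plan is to read the left-hand side of \eqref{eq:transversality} as (the absolute value of) the determinant $W(\xi_1,\xi_2)$ of the $k\times k$ matrix whose columns are $\partial^1\Gamma(\xi_1),\dotsc,\partial^l\Gamma(\xi_1),\partial^1\Gamma(\xi_2),\dotsc,\partial^{k-l}\Gamma(\xi_2)$, and then to reduce everything, by affine rescaling, to the single assertion that one fixed determinant, $W(0,1)$, is nonzero. The inequality will in fact come out as an \emph{identity} $\abs{W(\xi_1,\xi_2)}=\abs{\xi_1-\xi_2}^{l(k-l)}\abs{W(0,1)}$.

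First I would dispose of degenerate cases. If $1\le l\le k-1$ and $\xi_1=\xi_2$, the vector $\partial^1\Gamma(\xi_1)$ is repeated among the factors of the wedge, so $W=0$, matching the right-hand side. If $l\in\{0,k\}$, then $W$ depends on only one of the two points and the right-hand side of \eqref{eq:transversality} is $\gtrsim 1$, so the claim reduces to $W$ being a nonzero constant; this is the $\xi_1=\xi_2=0$ instance of the nonvanishing statement below, and can also be seen directly since the relevant matrix is triangular with nonzero diagonal $1!,2!,\dotsc,k!$.

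Next, for the remaining case $0\le l\le k$ with $\xi_1\ne\xi_2$, I would apply the affine map $A=A_I$ from the discussion preceding Lemma~\ref{lem:rescale}, associated with $a=\xi_1$ and $\kappa=\xi_2-\xi_1$. Its linear part $DA$ is triangular with $\det DA=\kappa^{k(k+1)/2}$, and $(DA)\partial^i\Gamma(t)=\kappa^i(\partial^i\Gamma)(\xi_1+t\kappa)$ for all $i\ge 1$. Applying $\Lambda^k(DA)$ to the wedge $W(0,1)$ and evaluating it in two ways — as $\det(DA)\,W(0,1)$ and, by multilinearity, as $\bigl(\prod_{i=1}^l\kappa^i\bigr)\bigl(\prod_{i=1}^{k-l}\kappa^i\bigr)W(\xi_1,\xi_2)$ — gives
\[
\kappa^{k(k+1)/2}\,W(0,1)=\kappa^{\frac{l(l+1)}{2}+\frac{(k-l)(k-l+1)}{2}}\,W(\xi_1,\xi_2).
\]
Since $\frac{k(k+1)}{2}-\frac{l(l+1)}{2}-\frac{(k-l)(k-l+1)}{2}=l(k-l)$, this yields $\abs{W(\xi_1,\xi_2)}=\abs{\xi_1-\xi_2}^{l(k-l)}\abs{W(0,1)}$.

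It then remains only to check $W(0,1)\ne 0$, i.e.\ the linear independence of $\partial^1\Gamma(0),\dotsc,\partial^l\Gamma(0),\partial^1\Gamma(1),\dotsc,\partial^{k-l}\Gamma(1)$; this is the step I expect to be the (mild) obstacle. One option is to quote the generalized Vandermonde determinant formula of \cite[Equation (14)]{MR729034}, which is nonzero because $0\ne 1$. A self-contained alternative: if $w\in\R^k$ annihilates all these vectors, then $q(\xi):=\sum_{i=1}^k w_i\xi^i$ has no constant term and satisfies $q^{(j)}(0)=0$ for $1\le j\le l$ and $q^{(j)}(1)=0$ for $1\le j\le k-l$; hence $q'$ vanishes to order $\ge l$ at $0$ and to order $\ge k-l$ at $1$, so $q'$ has $\ge k$ roots with multiplicity while $\deg q'\le k-1$, forcing $q'\equiv 0$, hence $q\equiv 0$ and $w=0$. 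Collecting the pieces with $c_{k,l}:=\abs{W(0,1)}>0$ completes the proof.
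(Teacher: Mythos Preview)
Your argument is correct and follows the same overall strategy as the paper: exploit the affine symmetry of the moment curve to show that the wedge product equals $\abs{\xi_1-\xi_2}^{l(k-l)}$ times a fixed nonzero constant. The paper carries out the reduction by Taylor-expanding each $\partial^{i}\Gamma(\xi_{2})$ in the basis $\partial^{j}\Gamma(\xi_{1})$ and tracking the surviving permutations, whereas you package the same computation by applying $\Lambda^{k}(DA_{I})$ with the affine map $A_{I}$ already introduced before Lemma~\ref{lem:rescale}; this is a neat reuse of existing machinery and avoids the combinatorial sum. For the nonvanishing of $W(0,1)$, the paper evaluates the determinant explicitly via column operations and the classical Vandermonde formula, while your root-counting argument for $q'$ is a pleasant self-contained alternative that sidesteps any determinant computation.
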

\begin{proof}
We Taylor expand $\Gamma(\xi_2)$ around $\xi_1$: for $1 \leq i \leq k-l$,
\[
\partial^i \Gamma(\xi_2) = \sum_{j=i}^k \frac{1}{(j-i)!} \partial^j \Gamma(\xi_1) (\xi_2-\xi_1)^{j-i}.
\]
We plug this back to the left hand side of \eqref{eq:transversality}, and obtain an $k-l$ fold sum. If $\partial^{j_i} \Gamma$ is chosen for the $i$-th summand, then $(j_1,\dotsc,j_{k-l})$ has to be a permutation of $(l+1,\dotsc,k)$ in order for the term to be non-zero, in which case the power of $\xi_2-\xi_1$ is
\[
\sum_{i=1}^{k-l} (j_i-i) = ((l+1)+\dotsc+k)-(1+\dotsc+(k-l)) = l(k-l).
\]
Thus the left hand side of \eqref{eq:transversality} is equal to
\[
c_{k,l} \abs[\Big]{ \partial^{1}\Gamma(\xi_{1}) \wedge \dotsb \wedge \partial^{k}\Gamma(\xi_{1})} \abs{\xi_2-\xi_1}^{l(k-l)}
\]
for some constant $c_{k,l} \geq 0$. Setting $\xi_1 = 0$ and $\xi_2 = 1$ shows that $c_{k,l} > 0$; indeed then
the left hand side of \eqref{eq:transversality} is $\binom{k}{l} (\prod_{i=1}^l i!) (\prod_{j=1}^{k-l} j!)$, as can be seen by column operations and the classical Vandermonde determinant formula.
See also \cite{MR3994585}, \cite{arxiv:1811.02207} for similar calculations.
\end{proof}

\subsection{Decoupling for curves with torsion}
It is an observation going back to \cite[Proposition 2.1]{MR2288738} that decoupling inequalities for model manifolds self-improve to similar decoupling inequalities for similarly curved manifolds.
We need the following version of Theorem~\ref{thm:main} for more general curves with torsion, which is proved by the argument given in \cite[Section 7]{MR3374964}.

Suppose $l \in \N$ and $\gamma : [0,1] \to \hat{\R}^{l}$ is a curve such that
\begin{equation}
\label{eq:torsion}
\norm{\gamma}_{C^{l+1}} \lesssim 1
\quad\text{and}\quad
\abs{ \partial^{1}\gamma(\xi) \wedge \dotsm \wedge \partial^{l}\gamma(\xi) } \gtrsim 1.
\end{equation}
For dyadic intervals $J$, let $\calU_{J,\gamma}$ be the parallelepiped of dimensions $\abs{J}^{1} \times \dotsm \times \abs{J}^{l}$ whose center is $\gamma(c_J)$ and sides are parallel to $\partial^{1}\gamma(c_{J}),\dotsc,\partial^{l}\gamma(c_{J})$, and let $\Uspace_{J,\gamma}$ be polar to $\calU_{J,\gamma}$.
\begin{lemma}
\label{lem:torsion}
Suppose that Theorem~\ref{thm:main} is known with $k$ replaced by $l$.
Let $\gamma : [0,1] \to \hat{\R}^{l}$ be a curve satisfying \eqref{eq:torsion}.
Then for any $\epsilon, C >0$, any $\delta \in (0,1)$, and any tuple of functions $(f_{J})_{J \in \Part{\delta}}$ with $\supp \widehat{f_{J}} \subseteq C \calU_{J,\gamma}$ for all $J$, the following inequality holds:
\begin{equation}
\label{eq:dec-with-torsion}
\norm[\Big]{ \sum_{J \in \Part{\delta}} f_{J} }_{L^{p_{l}}(\R^{l})}
\lesssim_{\epsilon, C} \delta^{-\epsilon}
\Bigl( \sum_{J \in \Part{\delta}} \norm[\big]{ f_{J} }_{L^{p_{l}}(\R^{l})}^{2} \Bigr)^{1/2}.
\end{equation}
\end{lemma}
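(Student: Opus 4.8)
The plan is to deduce Lemma~\ref{lem:torsion} from Theorem~\ref{thm:main} with $k$ replaced by $l$ by the standard parabolic-rescaling/iteration scheme of Bourgain--Demeter (\cite[Section 7]{MR3374964}): one shows that the decoupling constant for a general curve $\gamma$ satisfying \eqref{eq:torsion} obeys a multiplicative self-improving inequality over dyadic scales, and then bootstraps. Concretely, fix $\epsilon$ and $C$, and let $\Dec_{\gamma}(\delta)$ denote the best constant in \eqref{eq:dec-with-torsion} (with the fixed $C$, and taken as a supremum over all curves $\gamma$ in the class \eqref{eq:torsion}, or rather over a compact-in-$C^{l+1}$ family, to make the recursion self-contained). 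The goal is the recursive estimate
\[
\Dec_{\gamma}(\delta) \lesssim_{\epsilon,C} \sigma^{-\epsilon} \sup_{\gamma'} \Dec_{\gamma'}(\delta/\sigma)
\]
for a suitable intermediate scale $\sigma \in (0,1)$, valid uniformly over $\gamma$ in the class, together with the trivial bound $\Dec_\gamma(\delta)\lesssim \delta^{-O(1)}$; iterating $\sim \log_{1/\sigma}(1/\delta)$ times then yields $\Dec_\gamma(\delta)\lesssim_{\epsilon,C}\delta^{-C'\epsilon}$, and since $\epsilon$ is arbitrary this is \eqref{eq:dec-with-torsion}.

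The key steps, in order: (1) Cover $[0,1]$ by the intervals $I\in\Part{\sigma}$ and split $\sum_{J\in\Part{\delta}}f_J = \sum_{I\in\Part{\sigma}} f_I$ with $f_I = \sum_{J\in\Part[I]{\delta}}f_J$. (2) Apply the flat (Bessel-type) decoupling into the caps $I$: since on the scale $\sigma$ the curve $\gamma$ on each $I$ fits inside a slab of thickness $\sim\sigma^2$ (using $\|\gamma\|_{C^{l+1}}\lesssim 1$ and nondegenerate torsion, the $\sigma$-neighbourhoods of the arcs $\gamma(I)$ are finitely overlapping after projection to the tangent direction), one gets $\bigl\|\sum_I f_I\bigr\|_{p_l}\lesssim \sigma^{-\epsilon/2}\bigl(\sum_I\|f_I\|_{p_l}^2\bigr)^{1/2}$ — here one actually invokes the flat decoupling lemma, which costs a power of the number of caps but is harmless after interpolation with the $\ell^1$ bound, or more efficiently one uses Theorem~\ref{thm:main} for $\gamma$ restricted to a lower-dimensional affine subspace; the cleanest route is: since the statement to be proved is exactly decoupling for a curve with torsion, apply the already-known $l$-dimensional decoupling for the \emph{moment} curve after the affine change of variables $A$ from Section~\ref{sect:bilinear} — but $\gamma$ is not the moment curve, so instead one uses that $\gamma$ is $C^{l+1}$-close to its degree-$l$ Taylor polynomial on each $\sigma$-interval. (3) On each cap $I$, affinely rescale: the map taking $I$ to $[0,1]$ sends $\calU_{J,\gamma}$ (for $J\subset I$) to $\calU_{J',\gamma_I}$ where $\gamma_I$ is the rescaled curve, which again satisfies \eqref{eq:torsion} with comparable constants (this is where $\|\gamma\|_{C^{l+1}}\lesssim 1$ and the torsion lower bound are used, together with the fact that the rescaling only involves derivatives up to order $l$, with the $(l+1)$-st derivative controlling the error terms); hence $\|f_I\|_{p_l}\lesssim \Dec_{\gamma_I}(\delta/\sigma)\bigl(\sum_{J\in\Part[I]{\delta}}\|f_J\|_{p_l}^2\bigr)^{1/2}$. (4) Combine (2) and (3) and take the supremum over $\gamma$ to close the recursion.

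The main obstacle, and the step deserving the most care, is step (2): proving that after decoupling into the scale-$\sigma$ caps one only loses $\sigma^{-\epsilon}$ rather than a power of $\sigma^{-1}$. The honest fix is the classical one from \cite[Section 7]{MR3374964}: run a two-parameter induction (on $\delta$ and on a "flat" vs "curved" dichotomy), or equivalently iterate with $\sigma = \delta^{1/l}$-type scales and use that the degree-$l$ moment curve decoupling at scale $\sigma$ applied to the tangent-line approximation of $\gamma$ on each cap is admissible because the $C^{l+1}$ bound makes $\gamma$ lie in the $O(\sigma^{l+1})\subset O(\delta\text{-neighbourhood relevant scale})$-neighbourhood of that polynomial curve, so its caps $\calU_{J,\gamma}$ are contained in $C'$-dilates of the moment-curve caps after an affine map. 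One must check that the implied constants in the recursion do not degrade as the iteration proceeds — this is exactly why the supremum is taken over the whole uniform class of curves \eqref{eq:torsion} rather than over a single $\gamma$, and why only derivatives up to order $l+1$ appear. The remaining ingredients — Hölder/flat decoupling bookkeeping, the rescaling computation, and the final geometric-series summation of the $\sigma^{-\epsilon}$ losses — are routine and are the reason the lemma is merely quoted here with a pointer to \cite[Section 7]{MR3374964}.
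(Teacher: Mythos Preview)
Your proposal identifies the right ingredients (Taylor approximation on short arcs, moment-curve decoupling, iteration over scales) and correctly flags step~(2) as the crux, but the recursive framework you set up is both more complicated than the paper's argument and, as you half-acknowledge, circular as written. In your scheme you need the estimate
\[
\norm[\big]{\textstyle\sum_{I\in\Part{\sigma}} f_I}_{p_l}\lesssim \sigma^{-\epsilon}\Bigl(\sum_{I}\norm{f_I}_{p_l}^2\Bigr)^{1/2}
\]
for the \emph{general} curve $\gamma$ on $[0,1]$; but this is exactly the lemma at scale $\sigma$, and your ``honest fix'' via the Taylor polynomial does not rescue it, because on $[0,1]$ the Taylor error is $O(1)$, not $O(\sigma^{l+1})$. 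The Taylor approximation only becomes useful \emph{inside} a short arc, which is step~(3), not step~(2). So with your ordering of steps the recursion $\Dec_\gamma(\delta)\lesssim \sigma^{-\epsilon}\sup_{\gamma'}\Dec_{\gamma'}(\delta/\sigma)$ cannot be closed without an additional idea.

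The paper sidesteps this entirely by \emph{not} introducing a general-curve constant or a supremum over $\gamma'$ at all. It fixes $\gamma$ once and for all and proves the single-scale step: for each $I\in\Part{\kappa}$,
\[
\norm{f_I}_{p_l}\lesssim_\epsilon \kappa^{-\epsilon}\,\ell^2_{I'\in\Part[I]{\kappa^{(l+1)/l}}}\norm{f_{I'}}_{p_l}.
\]
On $I$ the degree-$l$ Taylor polynomial of $\gamma$ at $c_I$ is an affine image of the moment curve, and $\gamma$ deviates from it by $O(\kappa^{l+1})$; since the shortest side of $\calU_{I',\gamma}$ for $\abs{I'}=\kappa^{(l+1)/l}$ is $(\kappa^{(l+1)/l})^l=\kappa^{l+1}$, these caps sit inside $O(1)$-dilates of the moment-curve caps, and the assumed Theorem~\ref{thm:main} (with $k$ replaced by $l$) applies after an affine change of variables. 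One then iterates $\kappa\mapsto\kappa^{(l+1)/l}$ a large number $A$ of times, starting from an initial trivial decoupling at scale $\delta^{(l/(l+1))^{A}}$ (near $1$), and lets $A\to\infty$. Note the correct scale relation is $\kappa\to\kappa^{(l+1)/l}$, not the ``$\sigma=\delta^{1/l}$'' you suggest. This route uses only the \emph{moment-curve} decoupling at every step, never a recursion on a general-curve constant, and thus avoids the circularity in your step~(2) altogether.
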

\begin{proof}
Let $(f_{J})_{J \in \Part{\delta}}$ be a tuple of functions with $\supp \widehat{f_{J}} \subseteq C \calU_{J,\gamma}$ for all $J$.
It suffices to show that, for every $\kappa > \delta^{l/(l+1)}$ and $I \in \Part{\kappa}$, we have
\begin{equation}
\label{eq:torsion:induction-step}
\norm[\big]{ f_{I} }_{L^{p_{l}}(\R^{l})}
\lesssim_{\epsilon} \kappa^{-\epsilon}
\ell^{2}_{I' \in \Part[I]{\kappa^{(l+1)/l}}} \norm[\big]{ f_{I'} }_{L^{p_{l}}(\R^{l})}
\end{equation}
where we abbreviated $f_{I'} := \sum_{J\in\Part[I']{\delta}} f_{J}$ for $I' \in \Part[I]{\kappa^{(l+1)/l}}$ and similarly for $f_{I}$.

Indeed, if \eqref{eq:torsion:induction-step} is known, then we can use a trivial decoupling inequality to reduce to the case that $f_{J} \neq 0$ only if $J \subseteq I$ for some $I \in \Part{\delta^{(l/(l+1))^{M}}}$ for a large integer $M$, and then apply \eqref{eq:torsion:induction-step} $M$ times.
This will give \eqref{eq:dec-with-torsion} with power, say, $(l/(l+1))^{M} (l+1) + l \epsilon$ in place of $\epsilon$.
Since $M$ is arbitrary, this concludes the proof.

To see that \eqref{eq:torsion:induction-step} holds, observe that, on the interval $I$, we have
\[
\gamma(\xi) = \underbrace{\gamma(c_{I}) + \partial^{1} \gamma(c_{I}) \cdot (\xi-c_{I}) + \dotsb + \frac{\partial^{l} \gamma(c_{I})}{l!} \cdot (\xi-c_{I})^{l}} + O(\kappa^{l+1}).
\]
By \eqref{eq:torsion}, the marked part of the above expression is, up to a uniformly non-singular affine transformation, a moment curve of degree $l$.
For every $I' \in \Part[I]{\kappa^{(l+1)/l}}$, we have $\supp \widehat{f_{I'}} \subseteq C \calU_{I',\gamma}$, and the parallelepiped $\calU_{I',\gamma}$ is contained in a similar parallelepiped associated to this moment curve, since the shortest side of $\calU_{I',\gamma}$ is $(\kappa^{(l+1)/l})^l \gtrsim O(\kappa^{l+1})$.
Hence, the claim \eqref{eq:torsion:induction-step} follows from a rescaled version of Theorem~\ref{thm:main}; see \eqref{eq:lin_rescale} and its proof.
\end{proof}

\begin{corollary}
\label{cor:torsion-local}
In the situation of Lemma~\ref{lem:torsion}, for any $A'>0$ and for every ball $B \subset \R^{l}$ of radius $\delta^{-l}$, we have
\[
\fint_{B} \abs[\Big]{ \sum_{J \in \Part{\delta}}f_{J}}^{p_{l}}
\lesssim_{\epsilon,C,A'} \delta^{-\epsilon}
\Bigl( \ell^{2}_{J \in \Part{\delta}} \norm{f_{J}}_{L^{p_{l}}(\Phi_{B})} \Bigr)^{p_{l}},
\]
where $\fint_{B} := \abs{B}^{-1} \int_{B}$ denotes the average integral and
\[
\Phi_{B}(x) := \abs{B}^{-1} (1+\delta^{l}\dist(x,B))^{-A'}
\]
is an $L^{1}$ normalized bump function adapted to $B$.
\end{corollary}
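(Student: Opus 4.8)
The plan is to deduce Corollary~\ref{cor:torsion-local} from the global estimate \eqref{eq:dec-with-torsion} of Lemma~\ref{lem:torsion} by the standard localization device: one multiplies every $f_{J}$ by a single Schwartz bump $\eta_{B}$ with $\abs{\eta_{B}}\gtrsim 1$ on $B$ whose Fourier transform is supported in a ball of radius $\lesssim\delta^{l}$ --- the latter being comparable to the \emph{shortest} side of the parallelepipeds $\calU_{J,\gamma}$, so that the Fourier support hypothesis is only mildly enlarged --- and then applies \eqref{eq:dec-with-torsion} to the modified tuple $(f_{J}\eta_{B})$.

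To set this up, I would fix once and for all a Schwartz function $\eta$ on $\R^{l}$ (depending only on $k,l$) with $\abs{\eta}\gtrsim 1$ on the unit ball, $\supp\widehat{\eta}\subseteq B(0,\epsilon_{0})$ for a suitable small absolute constant $\epsilon_{0}>0$, and rapid decay $\abs{\eta(x)}\lesssim (1+\abs{x})^{-10k}$. Writing $B=B(x_{0},\delta^{-l})$, set $\eta_{B}(x):=\eta(\delta^{l}(x-x_{0}))$. Then $\widehat{\eta_{B}}$ is supported in a ball of radius $\epsilon_{0}\delta^{l}\leq\delta^{l}$; we have $\abs{\eta_{B}}\gtrsim 1$ on $B$; and, since $p_{l}\geq 1$ and $\dist(x,B)\leq\abs{x-x_{0}}$, also $\abs{\eta_{B}(x)}^{p_{l}}\lesssim (1+\delta^{l}\dist(x,B))^{-10k}=\abs{B}\,\phi_{B}(x)$. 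Now put $F_{J}:=f_{J}\eta_{B}$; since $\widehat{F_{J}}=\widehat{f_{J}}*\widehat{\eta_{B}}$ and the shortest side of $\calU_{J,\gamma}$ has length $\abs{J}^{l}\sim\delta^{l}$, we get $\supp\widehat{F_{J}}\subseteq C\calU_{J,\gamma}+B(0,\delta^{l})\subseteq C'\calU_{J,\gamma}$ for a constant $C'$ depending only on $C$.

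The conclusion then falls out by applying Lemma~\ref{lem:torsion} to the tuple $(F_{J})_{J\in\Part{\delta}}$ (with constant $C'$), raising the resulting inequality to the $p_{l}$-th power to get
\[
\int_{\R^{l}}\abs[\Big]{\sum_{J\in\Part{\delta}}f_{J}\eta_{B}}^{p_{l}}
\lesssim_{\epsilon,C}\delta^{-\epsilon p_{l}}\Bigl(\ell^{2}_{J\in\Part{\delta}}\norm{f_{J}\eta_{B}}_{L^{p_{l}}(\R^{l})}\Bigr)^{p_{l}},
\]
and then estimating both sides. On the left, restrict the integral to $B$ and use $\abs{\eta_{B}}\gtrsim 1$ there to bound below by $\gtrsim\abs{B}\fint_{B}\abs{\sum_{J}f_{J}}^{p_{l}}$. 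On the right, the pointwise bound $\abs{\eta_{B}}^{p_{l}}\lesssim\abs{B}\phi_{B}$ gives $\norm{f_{J}\eta_{B}}_{L^{p_{l}}(\R^{l})}^{p_{l}}\lesssim\abs{B}\norm{f_{J}}_{L^{p_{l}}(\phi_{B})}^{p_{l}}$, hence $\bigl(\ell^{2}_{J}\norm{f_{J}\eta_{B}}_{L^{p_{l}}(\R^{l})}\bigr)^{p_{l}}\lesssim\abs{B}\bigl(\ell^{2}_{J}\norm{f_{J}}_{L^{p_{l}}(\phi_{B})}\bigr)^{p_{l}}$. Dividing through by $\abs{B}$ and relabelling $\epsilon p_{l}$ as $\epsilon$ (permissible since $\epsilon>0$ is arbitrary and $p_{l}$ is a fixed constant) yields the claim.

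I expect the only genuinely delicate point to be the Fourier-support bookkeeping in the middle paragraph: one must arrange $\widehat{\eta_{B}}$ to be supported in a ball whose radius is at most a constant multiple of the shortest side $\sim\delta^{l}$ of $\calU_{J,\gamma}$, while simultaneously keeping $\abs{\eta_{B}}\gtrsim 1$ on all of $B$ and a decay rate strong enough that $\abs{\eta_{B}}^{p_{l}}$ is dominated by $\abs{B}\phi_{B}$. This is precisely what forces $B$ to be taken at the scale $\delta^{-l}$; everything else is routine.
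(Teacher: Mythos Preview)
Your proposal is correct and is exactly the approach taken in the paper: the paper's proof is the single sentence ``Apply Lemma~\ref{lem:torsion} to functions $f_{J}\psi_{B}$, where $\psi_{B}$ is a Schwartz function such that $\abs{\psi_{B}} \sim 1$ on $B$ and $\supp \widehat{\psi_{B}} \subseteq B(0,\delta^{l})$,'' and you have simply spelled out the standard details behind it (the Fourier-support enlargement via the shortest side $\sim\delta^{l}$ of $\calU_{J,\gamma}$, and the pointwise bound $\abs{\eta_{B}}^{p_{l}}\lesssim\abs{B}\phi_{B}$).
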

\begin{proof}
Apply Lemma~\ref{lem:torsion} to functions $f_{J}\psi_{B}$, where $\psi_{B}$ is a Schwartz function such that $\abs{\psi_{B}} \sim 1$ on $B$ and $\supp \widehat{\psi_{B}} \subseteq B(0,\delta^{l})$.
\end{proof}
We will use Corollary~\ref{cor:torsion-local} with $A' := A + k - 2$, where $A$ is the exponent occurring in the definition of $\phi_I$.
The choice of the exponent $A'$ ensures that Lemma~\ref{lem:convolution-H} holds.

\subsection{Using the lower degree inductive hypothesis}
The following two key lemmas should be compared to Lemma 7.1 of \cite{MR3938716}, which plays a similarly key role in nested efficient congruencing.
The results below improve upon those in \cite{arxiv:1906.07989} by incorporating sharp canonical scale decoupling inequalities of all degrees $l<k$, whereas in \cite{arxiv:1906.07989} small ball decoupling, which is not yet known for higher degrees, was used in the case $l=2$.

\begin{lemma}[Lower degree decoupling] \label{lem:b}
Let $l \in \Set{1,\dotsc,k-1}$ and assume that Theorem~\ref{thm:main} is known with $k$ replaced by $l$.
Let $\delta \in (0,1)$ and $(f_K)_{K \in \Part{\delta}}$ be a tuple of functions so that $\supp \widehat{f_K} \subset \calU_K$ for every $K$.
If $0 \leq a \leq (k-l+1)b/l$, then, for any pair of frequency intervals $I \in \Part{\delta^a}$, $I' \in \Part{\delta^b}$ with $\dist(I,I') \geq 1/4$, we have
\begin{equation}
\label{eq:fiber-dec}
\begin{split}
& \int_{\R^k} \big( \abs{f_{I}}^{p_{l}}*\phi_{I} \big) \big( \abs{f_{I'}}^{p_{k}-p_{l}}* \phi_{I'}\big)\\
& \lesssim_{\epsilon} \delta^{-b\epsilon} \Bigl( \sum_{J\in \Part[I]{\delta^{(k-l+1)b/l}}} \Bigl( \int_{\R^k} \big( \abs{f_{J}}^{p_{l}}*\phi_{J} \big) \big( \abs{f_{I'}}^{p_{k}-p_{l}}* \phi_{I'}\big) \Bigr)^{2/p_{l}} \Bigr)^{p_{l}/2}.
\end{split}
\end{equation}
\end{lemma}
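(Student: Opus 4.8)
The plan is to freeze the factor $\abs{f_{I'}}^{p_k-p_l}*\phi_{I'}$ and view the left-hand side of \eqref{eq:fiber-dec} as an $L^{p_l}$-type expression in $f_I$ against a nonnegative weight, so that the lower-degree decoupling (Theorem~\ref{thm:main} for the curve of degree $l$, available by hypothesis, in the form of Lemma~\ref{lem:torsion} and Corollary~\ref{cor:torsion-local}) can be applied on $I$ at scale $\kappa:=\delta^{(k-l+1)b/l}$. The point is that, after mollification by $\phi_{I'}$, the weight $W:=\abs{f_{I'}}^{p_k-p_l}*\phi_{I'}$ is roughly constant at the scale of the dual box $\Uspace_{I'}$, whose shortest side has length $\sim \abs{I'}^{-1}=\delta^{-b}$. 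First I would chop $\R^k$ into translates of a ball $B$ of radius $\delta^{-b}$ (or rather a suitable box adapted to $\Uspace_{I'}$), on each of which $W$ is comparable to its average $W_B$; on such a ball the left side is $\lesssim W_B\int_B \abs{f_I}^{p_l}*\phi_I$, and by the uncertainty principle (Lemma~\ref{lem:uncertainty}) and a routine averaging this is comparable to $W_B\int_B\abs{f_I}^{p_l}$ up to rapidly decaying tails.

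The heart of the matter is then to decouple $\int_B\abs{f_I}^{p_l}$ over $J\in\Part[I]{\kappa}$. After the affine rescaling $A_I$ that sends $I$ to $[0,1]$, the arcs $J\subset I$ become arcs of length $\delta^a/\kappa$, and the frequency pieces $f_J$ are supported in boxes $\calU_{J_I}$ that lie, by the Taylor-expansion argument in the proof of Lemma~\ref{lem:torsion}, inside slight enlargements of the canonical boxes for a curve $\gamma$ of degree $l$ satisfying the torsion hypothesis \eqref{eq:torsion} — here is where the constraint $a\le (k-l+1)b/l$ is used, as it guarantees $\kappa\ge \delta^a$, equivalently that the rescaled scale $\delta^a/\kappa\le 1$, and more precisely that the shortest relevant side dominates the $O((\delta^a/\kappa)^{l+1})$ error in the Taylor expansion, exactly as in the proof of Lemma~\ref{lem:torsion}. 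The ball $B$ has radius $\delta^{-b}$, which after rescaling by $A_I$ has size at least $(\delta^a)^{-l}\cdot(\text{something})$; one checks that $B$ contains a ball of radius $(\delta^a/\kappa)^{-l}$ in the rescaled coordinates, so Corollary~\ref{cor:torsion-local} applies with $\delta$ there replaced by $\delta^a/\kappa$ and gives, for each $B$,
\[
\fint_B \abs{f_I}^{p_l} \lesssim_\epsilon (\delta^a/\kappa)^{-\epsilon}\Bigl(\ell^2_{J\in\Part[I]{\kappa}} \norm{f_J}_{L^{p_l}(\phi_B')}\Bigr)^{p_l},
\]
with $\phi_B'$ an $L^1$-normalized bump adapted to (a dilate of) $B$, and $(\delta^a/\kappa)^{-\epsilon}\le\delta^{-b\epsilon'}$ for a suitable $\epsilon'$.

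To finish, multiply back by $W_B$, sum over the balls $B$ covering $\R^k$ (handling the overlap of the bumps $\phi_B'$ and the Schwartz tails of $W$ by the usual rapidly-decaying-weight bookkeeping), and re-interpret the sum $\sum_B W_B\int_B$ of the $J$-local quantities: running the uncertainty principle backwards, $W_B\norm{f_J}_{L^{p_l}(\phi_B')}^{p_l}$ summed over $B$ reconstitutes $\int_{\R^k}(\abs{f_J}^{p_l}*\phi_J)(\abs{f_{I'}}^{p_k-p_l}*\phi_{I'})$, which is precisely the $J$-summand on the right-hand side of \eqref{eq:fiber-dec}. The passage from $\sum_J$ of these integrals to the $\ell^2_J$ quantity raised to the power $p_l/2$ is just the embedding $\ell^{2/p_l}\hookrightarrow\ell^1$ (since $p_l\ge 2$), or equivalently an application of the triangle inequality in $L^{2/p_l}$ — this step and the averaging/uncertainty manipulations are routine. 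The main obstacle I anticipate is the bookkeeping that makes "$W$ is essentially constant on $B$" precise while keeping the weight $\phi_{I'}$ on the right-hand side intact: one must pass from the convolution $\abs{f_{I'}}^{p_k-p_l}*\phi_{I'}$ to a pointwise-comparable step function on the $B$'s, apply the decoupling, and then convert back, all without losing the $L^1$-normalization or picking up more than a $\delta^{-b\epsilon}$ loss; choosing the boxes $B$ adapted to $\Uspace_{I'}$ rather than round balls, and using that $\phi_{I'}$ and $\phi_{J}$ have comparable "width" in the short directions, is what makes this go through cleanly.
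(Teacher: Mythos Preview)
Your proposal has a genuine gap: you never pass to an $l$-dimensional problem, and without doing so there is no way to invoke Corollary~\ref{cor:torsion-local}, which is a statement about functions on $\R^{l}$. After the rescaling $A_{I}$ the moment curve in $\hat\R^{k}$ becomes another moment curve in $\hat\R^{k}$, not a curve of degree $l$; the boxes $\calU_{J_{I}}$ remain genuinely $k$-dimensional, so the claim that they ``lie inside slight enlargements of the canonical boxes for a curve $\gamma$ of degree $l$'' is not correct, and applying Corollary~\ref{cor:torsion-local} on a full-dimensional ball $B\subset\R^{k}$ does not make sense. There is also a scale mismatch that exposes the problem: degree-$l$ decoupling at scale $\kappa=\delta^{(k-l+1)b/l}$ requires spatial balls of radius $\kappa^{-l}=\delta^{-(k-l+1)b}$, but on a $k$-dimensional ball of that radius the weight $W=\abs{f_{I'}}^{p_{k}-p_{l}}*\phi_{I'}$ is \emph{not} essentially constant, since the shortest side of $\Uspace_{I'}$ has length only $\sim\delta^{-b}$. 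Taking $B$ of radius $\delta^{-b}$, as you first suggest, restores constancy of $W$ but is far too small to run the decoupling at scale $\kappa$. Note also that your outline never uses the separation hypothesis $\dist(I,I')\ge 1/4$, which is a red flag.

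The paper resolves this by \emph{slicing} rather than chopping into full-dimensional pieces. Fix $\xi'\in I'$, let $H$ be the orthogonal complement of $V^{(k-l)}(\xi')$, and foliate $\R^{k}$ by the $l$-dimensional affine planes $H+z$. The restriction $f_{J}|_{H+z}$ has Fourier support in the projection $P(\calU_{J})\subset C\,\calU_{J,P\circ\Gamma}$, where $P:\hat\R^{k}\to\hat\R^{k}/V^{(k-l)}(\xi')\cong\hat\R^{l}$; by the transversality Lemma~\ref{lem:transversality} (this is exactly where $\dist(I,I')\ge 1/4$ enters), the projected curve $P\circ\Gamma$ satisfies the torsion condition \eqref{eq:torsion} on $I$, so Corollary~\ref{cor:torsion-local} applies on the $l$-dimensional balls $B_{H}(z,\kappa^{-l})$. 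The point is that $H$ is aligned with the \emph{long} sides of $\Uspace_{I'}$ (those of length $\ge\delta^{-(k-l+1)b}$), so $B_{H}(0,\kappa^{-l})\subset C\,\Uspace_{I'}$ and $W$ is essentially constant on these $l$-dimensional balls even though their radius is $\delta^{-(k-l+1)b}\gg\delta^{-b}$. This simultaneously resolves the scale mismatch above. After decoupling on each slice one pulls the $\ell^{2}_{J}$ outside by Minkowski's inequality $L^{p_{l}}\ell^{2}\le\ell^{2}L^{p_{l}}$ and reassembles the integral. The hypothesis $a\le(k-l+1)b/l$ is used only to ensure $\kappa\le\abs{I}$, so that $\Part[I]{\kappa}$ is a genuine refinement; it plays no role in any Taylor-expansion error estimate.
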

The above lemma motivates our carefully chosen definition of asymmetric bilinear decoupling constants. It immediately implies the following result.
\begin{lemma}\label{lem:BI}
Let $l \in \Set{1,\dotsc,k-1}$ and assume that Theorem~\ref{thm:main} is known with $k$ replaced by $l$.
Then, for any $0 \leq a \leq \frac{k-l+1}{l}b$, $\epsilon>0$, and $\delta \in (0,1)$, we have
\[
\BilinDec_{l,a,b}(\delta)
\lesssim_{\epsilon} \delta^{-b\epsilon}
\BilinDec_{l, \frac{k-l+1}{l}b, b}(\delta).
\]
\end{lemma}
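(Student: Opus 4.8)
The plan is to derive Lemma~\ref{lem:BI} directly from Lemma~\ref{lem:b} by comparing the left-hand side of \eqref{eq:fiber-dec} with the definition of $\BilinDec_{l,a,b}(\delta)$ in \eqref{eq:BilinDec_l}, and recognizing the inner integrals on the right-hand side of \eqref{eq:fiber-dec} as instances of the defining integral for $\BilinDec_{l,\frac{k-l+1}{l}b,b}(\delta)$.

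\begin{proof}[Proof of Lemma~\ref{lem:BI}]
Fix $l$, $a$, $b$, $\epsilon$, $\delta$ as in the statement, and let $I \in \Part{\delta^{a}}$, $I' \in \Part{\delta^{b}}$ with $\dist(I,I') \geq 1/4$, and let $(f_{J})_{J \in \Part[I]{\delta} \cup \Part[I']{\delta}}$ be a tuple of functions with $\supp \widehat{f_{J}} \subseteq \calU_{J}$ for all $J$. We must bound the left-hand side of \eqref{eq:BilinDec_l}. Since $0 \leq a \leq \frac{k-l+1}{l}b$, Lemma~\ref{lem:b} applies and gives
\[
\int_{\R^k} \big( \abs{f_{I}}^{p_{l}}*\phi_{I} \big) \big( \abs{f_{I'}}^{p_{k}-p_{l}}* \phi_{I'}\big)
\lesssim_{\epsilon} \delta^{-b\epsilon} \Bigl( \sum_{J\in \Part[I]{\delta^{(k-l+1)b/l}}} \Bigl( \int_{\R^k} \big( \abs{f_{J}}^{p_{l}}*\phi_{J} \big) \big( \abs{f_{I'}}^{p_{k}-p_{l}}* \phi_{I'}\big) \Bigr)^{2/p_{l}} \Bigr)^{p_{l}/2}.
\]
Now fix $J \in \Part[I]{\delta^{(k-l+1)b/l}}$. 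Then $J \in \Part{\delta^{a'}}$ with $a' := (k-l+1)b/l$, while $I' \in \Part{\delta^{b}}$, and $\dist(J,I') \geq \dist(I,I') \geq 1/4$ since $J \subseteq I$. Moreover, $\widehat{f_{K}}$ is supported in $\calU_{K}$ for every $K \in \Part[J]{\delta} \cup \Part[I']{\delta}$. Hence the defining inequality \eqref{eq:BilinDec_l} for $\BilinDec_{l,a',b}(\delta)$ with the pair $(J,I')$ yields
\[
\int_{\R^k} \big( \abs{f_{J}}^{p_{l}}*\phi_{J} \big) \big( \abs{f_{I'}}^{p_{k}-p_{l}}* \phi_{I'}\big)
\leq \BilinDec_{l,a',b}(\delta)^{p_{k}} \Bigl[ \sum_{K \in \Part[J]{\delta}} \norm{f_K }_{p_{k}}^2 \Bigr]^{p_{l}/2} \Bigl[ \sum_{K' \in \Part[I']{\delta}} \norm{f_{K'} }_{p_{k}}^2 \Bigr]^{(p_{k}-p_{l})/2}.
\]
\end{proof}

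Raising this to the power $2/p_{l}$, summing over $J \in \Part[I]{\delta^{(k-l+1)b/l}}$, and using that the sets $\Part[J]{\delta}$ for such $J$ partition $\Part[I]{\delta}$, the sum $\sum_{J}\bigl[\sum_{K \in \Part[J]{\delta}}\norm{f_K}_{p_k}^2\bigr]$ telescopes to $\sum_{J \in \Part[I]{\delta}}\norm{f_J}_{p_k}^2$; taking the result to the power $p_{l}/2$ and combining with the displayed consequence of Lemma~\ref{lem:b} recovers exactly the right-hand side of \eqref{eq:BilinDec_l} with constant $\lesssim_{\epsilon}\delta^{-b\epsilon}\BilinDec_{l,\frac{k-l+1}{l}b,b}(\delta)^{p_{k}}$. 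Since $I, I'$ and the functions were arbitrary, taking the infimum over admissible constants in Definition~\ref{def:BilinDec_l} gives $\BilinDec_{l,a,b}(\delta) \lesssim_{\epsilon}\delta^{-b\epsilon}\BilinDec_{l,\frac{k-l+1}{l}b,b}(\delta)$, as claimed.

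The only point requiring a little care is the bookkeeping in the summation: one applies the elementary inequality that for nonnegative $c_J$ one has $\sum_J c_J^{2/p_l}$ controlled in terms of $(\sum_J d_J)^{\cdot}$ where $d_J = \sum_{K \in \Part[J]{\delta}}\norm{f_K}_{p_k}^2$, via the embedding $\ell^1 \hookrightarrow \ell^{2/p_l}$ when $p_l \geq 2$ (which holds since $p_l = l(l+1) \geq 2$ for $l \geq 1$), so that $\bigl(\sum_J d_J^{p_l/2}\bigr)^{2/p_l} \leq \sum_J d_J$; this is precisely what makes the two bracketed $\ell^2$ sums over $\Part[I]{\delta}$ appear with the correct exponents $p_l/2$ and $(p_k-p_l)/2$. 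I do not expect any genuine obstacle here — Lemma~\ref{lem:b} does all the real work, and Lemma~\ref{lem:BI} is just its clean repackaging in terms of the asymmetric decoupling constants.
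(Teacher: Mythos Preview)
Your argument is correct and is exactly what the paper has in mind: it says Lemma~\ref{lem:BI} ``immediately'' follows from Lemma~\ref{lem:b}, and you have simply written out that immediate deduction---apply \eqref{eq:fiber-dec}, bound each inner integral by the defining inequality for $\BilinDec_{l,(k-l+1)b/l,b}(\delta)$, raise to the power $2/p_l$, sum, and raise back.

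Two minor comments. First, your final paragraph about the embedding $\ell^1 \hookrightarrow \ell^{p_l/2}$ is a red herring: after raising the bound on the inner integral to the power $2/p_l$ you get a constant times $d_J := \sum_{K\in\Part[J]{\delta}}\norm{f_K}_{p_k}^2$ (times the $I'$ factor), and summing over $J$ gives $\sum_J d_J = \sum_{K\in\Part[I]{\delta}}\norm{f_K}_{p_k}^2$ as an \emph{equality}, since the $\Part[J]{\delta}$ partition $\Part[I]{\delta}$. No inequality of the form $(\sum_J d_J^{p_l/2})^{2/p_l}\le \sum_J d_J$ is ever invoked, so that paragraph should be deleted. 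Second, your \texttt{\textbackslash end\{proof\}} is misplaced---it appears before the paragraph that actually finishes the argument.
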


\begin{proof}[Proof of Lemma~\ref{lem:b}]
Denote $b' := (k-l+1)b/l$.
Fix $\xi'\in I'$ and let $\hat{H} := \hat{\R}^{k}/V^{k-l}(\xi')$ be the quotient space.
Let $P : \hat{\R}^{k} \to \hat{H}$ be the projection onto $\hat{H}$.
For every $\xi \in I$, it follows from Lemma~\ref{lem:transversality} that
\[
\abs{ \partial^{1}(P\circ\Gamma)(\xi) \wedge \dotsm \wedge \partial^{l}(P\circ\Gamma)(\xi) } \gtrsim 1.
\]
Moreover, $P(\calU_{J}) \subseteq C \calU_{J, P \circ \Gamma}$.
Let $H$ be the orthogonal complement of $V^{k-l}(\xi')$ in $\R^{k}$, so that $\hat{H}$ is its Pontryagin dual.
Since the Fourier support of the restriction $f_{J} |_{H+z}$ to almost every translated copy of $H$ is contained in the projection of the Fourier support of $f_{J}$ onto $\hat{H}$, we will be able to apply Corollary~\ref{cor:torsion-local} on almost every translate $H+z$.

To be more precise, by Fubini's theorem, we write
\begin{equation}
\label{eq:2}
\int_{\R^k} \big( \abs{f_{I}}^{p_{l}}*\phi_{I} \big) \big( \abs{f_{I'}}^{p_{k}-p_{l}}* \phi_{I'}\big)
=
\int_{z \in \R^{k}} \fint_{B_H(z,\delta^{-b'l})} \big( \abs{f_{I}}^{p_{l}}*\phi_{I} \big) \big( \abs{f_{I'}}^{p_{k}-p_{l}}* \phi_{I'}\big),
\end{equation}
where $B_H(z,\delta^{-b' l})$ is the $l$-dimensional ball with radius $\delta^{-b'l}$ centered at $z$ inside the affine subspace $H+z$.
Since $B_H(0,\delta^{-b'l}) = B_H(0,\delta^{-(k-l+1)b}) \subseteq C \Uspace_{I'}$, we have
\[
\sup_{x \in B_H(z,\delta^{-b'l})} \big( \abs{f_{I'}}^{p_{k}-p_{l}}* \phi_{I'}\big)(x)
\lesssim
\big( \abs{f_{I'}}^{p_{k}-p_{l}}* \phi_{I'}\big)(z).
\]
Applying this estimate in \eqref{eq:2}, we are led to bound
\begin{align*}
\fint_{B_H(z,\delta^{-b' l})} \big( \abs{f_{I}}^{p_{l}}*\phi_{I} \big)
&=
\abs{f_{I}}^{p_{l}}*\phi_{I}*_H \frac{\one_{B_H(0,\delta^{-b'l})}}{\abs{B_H(0, \delta^{-b'l})}}(z)
=
\int_{z'} \phi_{I}(z-z') \fint_{B_H(z',\delta^{-b'l})} \abs{f_{I}}^{p_{l}}
\end{align*}
where $*_H$ denotes convolution on the subspace $H$.
By Corollary~\ref{cor:torsion-local} with $\delta^{b'}$ in place of $\delta$ applied to the curve $\gamma = P \circ \Gamma$, the above is further bounded by
\begin{align*}
&\lesssim_{\epsilon}
\delta^{-b\epsilon} \int_{z'} \phi_{I}(z-z') \Bigl( \ell^{2}_{J \in \Part[I]{\delta^{b'}}} \norm{f_{J}}_{L^{p_{l}}(\Phi_{B_H(z',\delta^{-b'l})})} \Bigr)^{p_{l}}.
\end{align*}
Hence, the $p_l$-th root of \eqref{eq:2} can be bounded by
\begin{align*}
\eqref{eq:2}^{1/p_{l}}
&\lesssim_{\epsilon}
\delta^{-b\epsilon}
\Bigl( \int_{z,z'\in\R^{k}} \big( \abs{f_{I'}}^{p_{k}-p_{l}}* \phi_{I'}\big)(z) \phi_{I}(z-z') \Bigl( \ell^{2}_{J \in \Part[I]{\delta^{b'}}} \norm{f_{J}}_{L^{p_{l}}(z'+H,\Phi_{B_H(z',\delta^{-b'l})})} \Bigr)^{p_{l}} \Bigr)^{1/p_{l}}
\\ &\leq
\delta^{-b\epsilon}
\ell^{2}_{J \in \Part[I]{\delta^{b'}}} \Bigl( \int_{z,z'\in\R^{k}} \big( \abs{f_{I'}}^{p_{k}-p_{l}}* \phi_{I'}\big)(z) \phi_{I}(z-z') \norm{f_{J}}_{L^{p_{l}}(\Phi_{B_H(z',\delta^{-b'l})})}^{p_{l}} \Bigr)^{1/p_{l}},
\end{align*}
where we used Minkowski's inequality in the form $L^{p_{l}}\ell^{2} \leq \ell^{2}L^{p_{l}}$.
The double integral inside the brackets can be written as
\begin{align*}
\MoveEqLeft
\int_{\R^{k}} \bigl( \abs{f_{I'}}^{p_{k}-p_{l}} * \phi_{I'} \bigr) \bigl( \phi_{I} * \abs{f_{J}}^{p_{l}} *_{H} \Phi_{B_H(0,\delta^{-b'l})} \bigr)
\\ &=
\int_{\R^{k}} \bigl( \abs{f_{I'}}^{p_{k}-p_{l}} * \phi_{I'} *_{H} \Phi_{B_H(0,\delta^{-b'l})} \bigr) \bigl( \abs{f_{J}}^{p_{l}} * \phi_{I} \bigr)
\\ &\lesssim
\int_{\R^{k}} \bigl( \abs{f_{I'}}^{p_{k}-p_{l}} * \phi_{I'} \bigr) \bigl( \abs{f_{J}}^{p_{l}} * \phi_{I} \bigr),
\end{align*}
where we used $b'l = (k-l+1)b$ and Lemma~\ref{lem:convolution-H}.
This is in turn
\[
\lesssim
\int_{\R^k} \big( \abs{f_{J}}^{p_{l}}*\phi_{J} \big) \big( \abs{f_{I'}}^{p_{k}-p_{l}}* \phi_{I'}\big),
\]
because $ \abs{f_{J}}^{p_{l}} * \phi_{I} \lesssim \abs{f_{J}}^{p_{l}}*\phi_{J}*\phi_{I}$ by Lemma~\ref{lem:uncertainty}, which is $\lesssim \abs{f_{J}}^{p_{l}}*\phi_{J}$ by Lemma~\ref{lem:convolution}.
\end{proof}

\section{Bootstrap and Iteration}
\label{sect:induction}
In this section, we will prove Theorem~\ref{thm:main}, using Lemma~\ref{lem:BI}.
\begin{lemma}[H\"{o}lder] \label{lem:Holder}
For $l \in \Set{1, \dotsc, k-1}$, if $a, b \in (0,1)$ and $\delta \in (0,1)$, then
\begin{equation}
\label{eq:Holder}
\BilinDec_{l,a,b}(\delta)
\leq
\BilinDec_{k-l,b,a}(\delta)^{\frac{1}{k-l+1}} \BilinDec_{l-1,a,b}(\delta)^{\frac{k-l}{k-l+1}}.
\end{equation}
\end{lemma}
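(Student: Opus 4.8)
The plan is to interpolate the two bilinear constants on the right-hand side of \eqref{eq:Holder} by applying H\"older's inequality twice, with the weights $\theta := \frac{1}{k-l+1}$ and $1-\theta = \frac{k-l}{k-l+1}$, both of which lie in $(0,1)$ since $1 \leq k-l \leq k-1$. Fix intervals $I \in \Part{\delta^{a}}$ and $I' \in \Part{\delta^{b}}$ with $\dist(I,I') \geq 1/4$, together with a tuple $(f_{J})_{J \in \Part[I]{\delta}\cup\Part[I']{\delta}}$ as in Definition~\ref{def:BilinDec_l}; the task is to bound the left-hand side of \eqref{eq:BilinDec_l}. Everything rests on the two arithmetic identities
\[
p_{l} = \theta\,(p_{k}-p_{k-l}) + (1-\theta)\,p_{l-1},
\qquad
p_{k}-p_{l} = \theta\,p_{k-l} + (1-\theta)\,(p_{k}-p_{l-1}),
\]
which one checks by a direct computation from $p_{j}=j(j+1)$ and the value of $\theta$ (the second identity is the first one subtracted from $p_{k}$).

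The first step is to push H\"older's inequality through the two convolutions. Using the first identity to factor $\abs{f_{I}(x-y)}^{p_{l}} = \bigl(\abs{f_{I}(x-y)}^{p_{k}-p_{k-l}}\bigr)^{\theta}\bigl(\abs{f_{I}(x-y)}^{p_{l-1}}\bigr)^{1-\theta}$ and then applying H\"older's inequality in $y$ against the positive measure $\phi_{I}(y)\,\dif y$, one obtains the pointwise estimate
\[
\abs{f_{I}}^{p_{l}} * \phi_{I}
\leq
\bigl( \abs{f_{I}}^{p_{k}-p_{k-l}} * \phi_{I} \bigr)^{\theta}
\bigl( \abs{f_{I}}^{p_{l-1}} * \phi_{I} \bigr)^{1-\theta},
\]
with \emph{no} loss of constant, since the total mass $\int \phi_{I}$ enters with exponent $1 - \theta - (1-\theta) = 0$. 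The analogous pointwise bound holds for $\abs{f_{I'}}^{p_{k}-p_{l}} * \phi_{I'}$ in terms of $\abs{f_{I'}}^{p_{k-l}} * \phi_{I'}$ and $\abs{f_{I'}}^{p_{k}-p_{l-1}} * \phi_{I'}$. Multiplying the two pointwise bounds, integrating over $\R^{k}$, and applying H\"older's inequality in $x$ with exponents $1/\theta$ and $1/(1-\theta)$, I arrive at
\[
\int_{\R^{k}}\bigl( \abs{f_{I}}^{p_{l}}*\phi_{I} \bigr)\bigl( \abs{f_{I'}}^{p_{k}-p_{l}}* \phi_{I'}\bigr)
\leq
\Bigl( \int_{\R^{k}} \bigl(\abs{f_{I}}^{p_{k}-p_{k-l}}*\phi_{I}\bigr)\bigl(\abs{f_{I'}}^{p_{k-l}}*\phi_{I'}\bigr) \Bigr)^{\theta}
\Bigl( \int_{\R^{k}} \bigl(\abs{f_{I}}^{p_{l-1}}*\phi_{I}\bigr)\bigl(\abs{f_{I'}}^{p_{k}-p_{l-1}}*\phi_{I'}\bigr) \Bigr)^{1-\theta}.
\]

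The last step is to recognise the two integrals. The first is exactly the left-hand side of \eqref{eq:BilinDec_l} for the constant $\BilinDec_{k-l,b,a}(\delta)$ --- with our $I'\in\Part{\delta^{b}}$ in the role of the first interval and $I\in\Part{\delta^{a}}$ in the role of the second, the distance hypothesis being symmetric --- and the second is the left-hand side of \eqref{eq:BilinDec_l} for $\BilinDec_{l-1,a,b}(\delta)$. Substituting these two inequalities and collecting terms, the exponents of $\ell^{2}_{J\in\Part[I]{\delta}}\norm{f_{J}}_{p_{k}}$ add up to $p_{l}/2$ and those of $\ell^{2}_{J'\in\Part[I']{\delta}}\norm{f_{J'}}_{p_{k}}$ add up to $(p_{k}-p_{l})/2$ (here the two arithmetic identities are used once more), while the decoupling constants combine to $\BilinDec_{k-l,b,a}(\delta)^{p_{k}\theta}\BilinDec_{l-1,a,b}(\delta)^{p_{k}(1-\theta)}$; comparing with Definition~\ref{def:BilinDec_l} gives \eqref{eq:Holder}. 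I expect the one delicate point to be the interchange of H\"older's inequality with the convolutions in the first step; it is legitimate and, crucially, lossless, precisely because $\phi_{I}$ and $\phi_{I'}$ are fixed positive densities whose total mass enters with exponent zero --- which is also why the lemma is stated with no implicit constant. The case $l=1$ needs no separate treatment, as then $p_{l-1}=p_{0}=0$ and $\BilinDec_{0,a,b}(\delta)$ is meaningful by the convention in the remark following Definition~\ref{def:BilinDec_l}.
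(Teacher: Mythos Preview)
Your proof is correct and follows essentially the same route as the paper: both arguments pin down the interpolation weight $\theta=\frac{1}{k-l+1}$ via the identity $p_{l}=\theta(p_{k}-p_{k-l})+(1-\theta)p_{l-1}$, apply H\"older's inequality pointwise inside each convolution and then globally in $x$, and finish by invoking the definitions of $\BilinDec_{k-l,b,a}(\delta)$ and $\BilinDec_{l-1,a,b}(\delta)$. Your additional remark that the total mass of $\phi_{I}$ enters with exponent zero, so that the pointwise H\"older step is lossless, is a useful clarification of why the inequality carries no implicit constant.
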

\begin{proof}
For $1 \leq l < k$, the points $(p_l,p_k-p_l)$, $(p_k-p_{k-l},p_{k-l})$ and $(p_{l-1},p_k-p_{l-1})$ are collinear, since their coordinates sum to $p_{k}$.
Hence, there exists $\theta_l \in \R$ such that
\begin{equation} \label{}
(p_l,p_k-p_l) = \theta_l (p_k-p_{k-l},p_{k-l}) + (1-\theta_l) (p_{l-1},p_k-p_{l-1}).
\end{equation}
Substituting $p_l = l(l+1)$ yields $\theta_l = 1/(k-l+1)$.
Let $f_I$, $f_{I'}$ be as in Definition~\ref{def:BilinDec_l} for $\BilinDec_{l,a,b}(\delta)$. By H\"older's inequality, we obtain
\begin{align*}
LHS\eqref{eq:BilinDec_l}
&\leq
\int_{\R^k} \big( \abs{f_{I}}^{p_{k}-p_{k-l}}*\phi_{I} \big)^{\theta_{l}} \big( \abs{f_{I}}^{p_{l-1}}*\phi_{I} \big)^{1-\theta_{l}}
\big( \abs{f_{I'}}^{p_{k-l}}* \phi_{I'}\big)^{\theta_{l}} \big( \abs{f_{I'}}^{p_{k}-p_{l-1}}* \phi_{I'}\big)^{1-\theta_{l}}
\\ &\leq
\Bigl( \int_{\R^k} \big( \abs{f_{I}}^{p_{k}-p_{k-l}}*\phi_{I} \big) \big( \abs{f_{I'}}^{p_{k-l}}* \phi_{I'}\big) \Bigr)^{\theta_{l}}
\Bigl( \int_{\R^k} \big( \abs{f_{I}}^{p_{l-1}}*\phi_{I} \big)
\big( \abs{f_{I'}}^{p_{k}-p_{l-1}}* \phi_{I'}\big) \Bigr)^{1-\theta_{l}}.
\end{align*}
The claim \eqref{eq:Holder} then follows from the definitions of $\BilinDec_{k-l,b,a}(\delta)$ and $\BilinDec_{l-1,a,b}(\delta)$.
\end{proof}

\begin{lemma}
\label{lem:induct_key}
Let $l \in \Set{1,\dotsc,k-1}$ and assume that Theorem~\ref{thm:main} is known with $k$ replaced by $l$.
Let $\epsilon>0$.
Then, for every $b \in [0,1]$ such that $b \leq \frac{l(k-l)}{(l+1)(k-l+1)}$ and, if $l\neq 1$, in addition $b \leq \frac{l-1}{k-l+2}$, we have
\[
\BilinDec_{l,\frac{k-l+1}{l}b,b}(\delta) \lesssim_{\epsilon} \delta^{-b\epsilon}
\BilinDec_{k-l,\frac{l+1}{l} \frac{k-l+1}{k-l}b,\frac{k-l+1}{l}b}(\delta)^{\frac{1}{k-l+1}} \BilinDec_{l-1,\frac{k-l+2}{l-1}b,b}(\delta)^{\frac{k-l}{k-l+1}}.
\]
\end{lemma}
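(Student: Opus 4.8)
The plan is to combine the two structural lemmas just proved, namely Lemma~\ref{lem:BI} (lower degree decoupling) and Lemma~\ref{lem:Holder} (H\"older interpolation), and to check that the hypotheses on $b$ are exactly what is needed for the intermediate applications to be legal. The starting point is the left-hand side $\BilinDec_{l,\frac{k-l+1}{l}b,b}(\delta)$, which is $\BilinDec_{l,a,b}(\delta)$ with $a = \frac{k-l+1}{l}b$, i.e.\ with $a$ at the extreme endpoint $a = \frac{k-l+1}{l}b$ of the range allowed in Lemma~\ref{lem:BI}. So Lemma~\ref{lem:BI} does not (yet) gain us anything here; instead we first apply Lemma~\ref{lem:Holder} (with this choice of $a,b$) to get
\[
\BilinDec_{l,a,b}(\delta) \leq \BilinDec_{k-l,b,a}(\delta)^{\frac{1}{k-l+1}} \BilinDec_{l-1,a,b}(\delta)^{\frac{k-l}{k-l+1}},
\]
and then treat the two factors separately, each with a rescaled application of Lemma~\ref{lem:BI} (with $k-l$, resp.\ $l-1$, playing the role of the degree-like parameter in the constraint). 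The key point is that Lemma~\ref{lem:BI}, as stated, operates on constants for the moment curve in $\hat\R^k$ with exponents $p_l$, $p_{k-l}$, $p_{l-1}$, all of which are the degree-$k$ critical exponents, so both factors are genuinely of the form to which the inductive hypothesis (Theorem~\ref{thm:main} for degrees $<k$) applies; no further rescaling of $k$ is needed.

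The first factor is $\BilinDec_{k-l,b,a}(\delta)$ with $a = \frac{k-l+1}{l}b$. We want to push the first index $b$ up to its endpoint $\frac{l+1}{k-l} \cdot a$ (this is the endpoint $\frac{(k-l)+1}{k-l}$ times the other index, with $k-l$ in the role of the degree) using Lemma~\ref{lem:BI} with $l$ replaced by $k-l$; but Lemma~\ref{lem:BI} raises the \emph{first} index, so to raise the first index $b$ of $\BilinDec_{k-l,b,a}$ we need $b \leq \frac{(k-l)+1}{k-l} a = \frac{k-l+1}{k-l}\cdot \frac{k-l+1}{l}b$, which holds automatically. Then Lemma~\ref{lem:BI} gives
\[
\BilinDec_{k-l,b,a}(\delta) \lesssim_\epsilon \delta^{-b\epsilon}\, \BilinDec_{k-l,\frac{k-l+1}{k-l}a,a}(\delta) = \delta^{-b\epsilon}\,\BilinDec_{k-l,\frac{l+1}{l}\frac{k-l+1}{k-l}b,\frac{k-l+1}{l}b}(\delta),
\]
where we substituted $a = \frac{k-l+1}{l}b$ to recognize the first term on the right of the asserted inequality. (Here I am implicitly using that $\dist(I,I')\ge 1/4$ is a symmetric condition, so $\BilinDec_{k-l,b,a}$ really is defined with $I$ coming from $\Part{\delta^b}$ and $I'$ from $\Part{\delta^a}$.) The power of $\delta$ produced is $\delta^{-a\epsilon}$ for the rescaled Lemma~\ref{lem:BI} applied at scale related to $a$; since $a \lesssim_{k,l} b$ for $b$ in the stated range, this is absorbed into $\delta^{-b\epsilon}$ after renaming $\epsilon$.

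The second factor is $\BilinDec_{l-1,a,b}(\delta)$ with $a = \frac{k-l+1}{l}b$. We apply Lemma~\ref{lem:BI} with $l$ replaced by $l-1$ to raise the first index $a$ to its endpoint $\frac{(l-1)+1}{l-1}\cdot\ldots$; wait — Lemma~\ref{lem:BI} with degree parameter $l-1$ pushes the first index up to $\frac{(k-(l-1)+1)}{l-1}b = \frac{k-l+2}{l-1}b$, which is exactly the first index appearing in the asserted right-hand term $\BilinDec_{l-1,\frac{k-l+2}{l-1}b,b}(\delta)$. For this to be legal we need the current first index $a$ to satisfy $a \leq \frac{k-l+2}{l-1}b$, i.e.\ $\frac{k-l+1}{l}b \leq \frac{k-l+2}{l-1}b$, i.e.\ $(k-l+1)(l-1) \leq (k-l+2)l$, which holds for all $1<l<k$; this is where the condition $b \le \frac{l-1}{k-l+2}$ enters — not for legality of the step but to guarantee the endpoint index $\frac{k-l+2}{l-1}b$ stays $\le 1$ so that the partition $\Part{\delta^{\frac{k-l+2}{l-1}b}}$ is meaningful. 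Similarly $b \le \frac{l(k-l)}{(l+1)(k-l+1)}$ is the condition ensuring the endpoint index $\frac{l+1}{l}\frac{k-l+1}{k-l}b$ of the first factor stays $\le 1$. When $l=1$ the second factor is $\BilinDec_{0,\cdot,b}$, which by \eqref{eq:BilinDec_0} is $\sim\Dec_k(\delta^{1-b})$ and does not depend on its first index, so the hypothesis $b \le \frac{l-1}{k-l+2}$ is vacuous and no such step is needed — this is why the lemma excludes it only when $l\neq 1$. Raising the index by Lemma~\ref{lem:BI} produces a loss $\delta^{-a\epsilon}$, again $\lesssim \delta^{-b\epsilon}$ in the stated range. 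Combining the two estimates, raised to powers $\frac{1}{k-l+1}$ and $\frac{k-l}{k-l+1}$ respectively, yields the claimed inequality.

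The main obstacle is purely bookkeeping: one must verify that each of the two intermediate invocations of Lemma~\ref{lem:BI} respects the constraint $0 \le a \le \frac{k-l+1}{l}b$ (with the appropriate substitutions of $k-l$ or $l-1$ for $l$), and that the resulting powers $\delta^{-\epsilon(\text{index})}$ are all dominated by $\delta^{-b\epsilon}$ up to renaming $\epsilon$; additionally one must confirm that all the first indices $\frac{l+1}{l}\frac{k-l+1}{k-l}b$ and $\frac{k-l+2}{l-1}b$ stay in $[0,1]$, which is exactly what the two numerical hypotheses on $b$ in the statement guarantee. No genuinely new analytic idea is required beyond Lemmas~\ref{lem:BI} and~\ref{lem:Holder}; the content is the precise choice of which index to raise and the verification that the arithmetic of the exponents $p_j = j(j+1)$ closes up.
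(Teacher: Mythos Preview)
Your proof is essentially the paper's: apply Lemma~\ref{lem:Holder} first, then Lemma~\ref{lem:BI} to each factor (with $l$ replaced by $k-l$ and by $l-1$, respectively), and your reading of the two hypotheses on $b$ as precisely the conditions keeping the new first indices $\frac{l+1}{l}\frac{k-l+1}{k-l}b$ and $\frac{k-l+2}{l-1}b$ inside $[0,1]$ is correct and more explicit than the paper's write-up. One arithmetic slip: when you substitute $k-l$ for $l$ in Lemma~\ref{lem:BI}, the endpoint ratio is $\frac{k-(k-l)+1}{k-l}=\frac{l+1}{k-l}$, not $\frac{(k-l)+1}{k-l}$ as you wrote; thus your displayed intermediate constant should be $\BilinDec_{k-l,\frac{l+1}{k-l}a,a}$, which (with $a=\frac{k-l+1}{l}b$) does equal the asserted $\BilinDec_{k-l,\frac{l+1}{l}\frac{k-l+1}{k-l}b,\frac{k-l+1}{l}b}$ --- so the conclusion is unaffected.
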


\begin{proof}
Just apply Lemma~\ref{lem:Holder}:
\[
\BilinDec_{l,\frac{k-l+1}{l}b,b}(\delta) \leq \BilinDec_{k-l,b,\frac{k-l+1}{l}b}(\delta)^{\frac{1}{k-l+1}} \BilinDec_{l-1,\frac{k-l+1}{l}b,b}(\delta)^{\frac{k-l}{k-l+1}}
\]
and then estimate the two factors on the right hand side using Lemma~\ref{lem:BI}.
In the first factor, we can apply Lemma~\ref{lem:BI} because
\[
b \leq \frac{l+1}{k-l} \frac{k-l+1}{l}b.
\]
If $2 \leq l \leq k-1$, then we can apply Lemma~\ref{lem:BI} in the second factor because
\[
\frac{k-l+1}{l}b \leq \frac{k-l+2}{l-1}b.
\]
If $l = 1$, the we do not have to do anything in the second factor, since $\BilinDec_{0,a,b}(\delta)$ does not depend on $a$.
\end{proof}

\begin{proof}[Proof of Theorem~\ref{thm:main}]
By induction on $k$.
The case $k=1$ is a direct consequence of Plancherel's theorem.
Fix $k \geq 2$ and assume that Theorem~\ref{thm:main} is already known with $k$ replaced by $l$ for any $l \in \Set{1,\dotsc,k-1}$.

Let $\eta$ be the infimum of all $\epsilon$ for which the decoupling inequality \eqref{eq:main} holds.
For $l \in \Set{0,\dotsc,k-1}$ and $0<b\ll 1$, let $A_{l}(b)$ be the infimum of all exponents $A$ such that we have
\[
\BilinDec_{l,\frac{k-l+1}{l}b,b}(\delta)
\lesssim \delta^{-A}.
\]
By \eqref{eq:BilinDec_0}, we have
\begin{equation}
\label{eq:6}
A_{0}(b) = \eta (1-b).
\end{equation}
The main recursive estimate for the exponents $A_{l}(b)$ is given by Lemma~\ref{lem:induct_key}, which implies that, for every $l \in \Set{1,\dotsc,k-1}$ and sufficiently small $b$, we have
\begin{equation}
\label{eq:3}
A_{l}(b) \leq \frac{1}{k-l+1} A_{k-l}(\frac{k-l+1}{l}b) + \frac{k-l}{k-l+1} A_{l-1}(b).
\end{equation}
We extract the information on the asymptotic behaviour of bilinear decoupling exponents $A_{l}(b)$ from the functional inequality \eqref{eq:3} by introducing the quantities
\[
A_{l} := \liminf_{b\to 0} \frac{\eta - A_{l}(b)}{b} \in \R \cup \Set{\pm\infty}.
\]
By \eqref{eq:6}, we have $A_{0}=\eta$.
Moreover, from \eqref{eq:3}, it follows that
\begin{equation}
\label{eq:7}
A_{l} \geq \frac{1}{l} A_{k-l} + \frac{k-l}{k-l+1} A_{l-1},
\quad
1 \leq l \leq k-1.
\end{equation}
In order to solve this linear system of inequalities for $\eta = A_{0}$, we need to know that the quantities $A_{l}$ are finite, so that we can perform algebraic operations.
The finiteness of these quantities is a manifestation of the equivalence between linear and bilinear decoupling inequalities.

By H\"older's inequality, similarly as in \eqref{eq:8}, for any $l\in\Set{1,\dotsc,k-1}$, $I \in \Part{\delta^{\frac{k-l+1}{l}b}}$, and $I'\in\Part{\delta^b}$, if $\supp \widehat{f_{I}} \subset C \calU_{I}$ and $\supp \widehat{f_{I'}} \subset C \calU_{I'}$, we have
\begin{align*}
\int_{\R^k}\big( \abs{f_{I}}^{p_{l}}*\phi_{I} \big) \big( \abs{f_{I'}}^{p_{k}-p_{l}}* \phi_{I'}\big)
&\leq
\Bigl( \int_{\R^k}\big( \abs{f_{I}}^{p_{l}}*\phi_{I} \big)^{\frac{p_{k}}{p_{l}}} \Bigr)^{\frac{p_{l}}{p_{k}}}
\Bigl( \int_{\R^k} \big( \abs{f_{I'}}^{p_{k}-p_{l}}* \phi_{I'}\big)^{\frac{p_{k}}{p_{k}-p_{l}}} \Bigr)^{\frac{p_{k}-p_{l}}{p_{k}}}
\\ &\lesssim
\Bigl( \int_{\R^k} \abs{f_{I}}^{p_{k}}*\phi_{I} \Bigr)^{\frac{p_{l}}{p_{k}}}
\Bigl( \int_{\R^k} \abs{f_{I'}}^{p_{k}}*\phi_{I'} \Bigr)^{\frac{p_{k}-p_{l}}{p_{k}}}
\\ &\lesssim
\norm{f_{I}}_{p_{k}}^{p_{l}} \norm{f_{I'}}_{p_{k}}^{p_{k}-p_{l}}.
\end{align*}
It follows that, for $l\in \Set{1,\dotsc,k-1}$, we have
\[
\BilinDec_{l,\frac{k-l+1}{l}b,b}(\delta)
\lesssim
\Dec_{k}(\delta^{1-\frac{k-l+1}{l}b})^{p_{l}/p_{k}}
\Dec_{k}(\delta^{1-b})^{(p_{k}-p_{l})/p_{k}}.
\]
Hence,
\begin{equation}
\label{eq:5}
A_{l}(b) \leq
\eta (1-\frac{k-l+1}{l}b) \frac{p_{l}}{p_{k}} +
\eta (1-b) \frac{p_{k}-p_{l}}{p_{k}}
=
\eta - \eta b (\frac{k-l+1}{l} \frac{p_{l}}{p_{k}} + \frac{p_{k}-p_{l}}{p_{k}}).
\end{equation}
Using Lemma~\ref{lem:bilinear} and Lemma~\ref{lem:BilinDec_trivial_first_step}, we see that for every $l\in \Set{1,\dotsc,k-1}$ and every $b \in [0,1]$ with $b \leq \frac{l}{k-l+1}$, we have
\begin{equation}
\label{eq:4}
\eta \leq Cb + A_{l}(b).
\end{equation}
The estimates \eqref{eq:5} and \eqref{eq:4} imply $\eta \lesssim A_{l} \leq C$ for $l\in\Set{1,\dotsc,k-1}$, and in particular that $A_{l}$ are finite numbers.

Summing the inequalities \eqref{eq:7} over $l=1,\dotsc,k-1$, we observe that $A_{1},\dotsc,A_{k-1}$ cancel out, and we are left with
\[
0 \geq \frac{k-1}{k} A_{0} = \frac{k-1}{k} \eta.
\]
This shows that the decoupling exponent is $\eta=0$.
\end{proof}

\begin{remark}
The fact that all $A_{l}$ with $1\leq l \leq k-1$ cancel out when we sum the inequalities \eqref{eq:7} can be more abstractly stated by saying that $(1,\dotsc,1)$ is a left eigenvector of the $(k-1)\times (k-1)$ coefficient matrix
\[
\begin{pmatrix}
0 & 0 & 0 & \dots & 0 & 0 & 1 \\
\frac{k-2}{k-1} & 0 & 0 & \dots & 0 & \frac{1}{2} & 0 \\
0 & \frac{k-3}{k-2} & 0 & \dots & \frac{1}{3} & 0 & 0 \\
& & \ddots & \iddots & & &\\
& & \iddots & \ddots & & &\\
0 & \frac{1}{k-2} & 0 & \dots & \frac{2}{3} & 0 & 0 \\
\frac{1}{k-1} & 0 & 0 & \dots & 0 & \frac{1}{2} & 0
\end{pmatrix},
\]
where the entry at the position $(l,l')$ is the coefficient of $A_{l'}$ on the right-hand side of the $l$-th inequality in \eqref{eq:7}.
We refer to \cite{arxiv:1512.03272} and \cite[Section 3.6]{arxiv:1811.02207} for a discussion of the role of such (Perron--Frobenius) eigenvectors in iterative procedures that are used to prove decoupling inequalities.
\end{remark}

\appendix
\section{Estimates for convolutions of bump functions}
The published version of this article used bump functions $\phi_{I}$  and $\Phi_{B}$ (defined in Section \ref{sect:asym_bilinear} and Corollary \ref{cor:torsion-local}, respectively) 
with exponents $A=A'=10k$, for which the estimates below do not quite work out.
As a remedy, we choose $A$ such that $A > k$ and $A \geq \frac{k(k + 1)}{2}$ and $A' = A + k - 2$.
We present detailed arguments with the new choice of these exponents.
\begin{lemma}
\label{lem:convolution}
For any dyadic intervals $J \subseteq I \subseteq [0,1]$, we have
\begin{equation}
\label{eq:11}
(\phi_I * \phi_J)(x) \lesssim \phi_{J}(x).
\end{equation}
\end{lemma}
\begin{proof}
One can rescale so that $I = [0,1]$. 
By rotating the coordinate system so that the axes are parallel to the sides of the smallest rectangle containing $\Uspace_{J}$, and writing $\delta := |J| \in (0,1]$, $\tilde{\phi}(x) := \max\{1,|x_1|,\dots,|x_k|\}^{-A}$ and $\tilde{\phi}_{\delta}(x) := \delta^{\frac{k(k+1)}{2}} \tilde{\phi}(\delta x_1, \dots, \delta^k x_k)$, the desired pointwise estimate can be written as 
\begin{equation} \label{eq:phi_tilde_bdd}
\int_{\R^k} \tilde{\phi}(x-y) \tilde{\phi}_{\delta}(y) dy \lesssim \tilde{\phi}_{\delta}(x)
\end{equation}
for all $x \in \R^k$. 
This estimate can be established by spliting the integral into two parts, noting that
\[
\int_{\tilde{\phi}_{\delta}(y) \leq 2^{A} \tilde{\phi}_{\delta}(x)} \tilde{\phi}(x-y) \tilde{\phi}_{\delta}(y) dy 
\leq 2^A \tilde{\phi}_{\delta}(x) \int_{\tilde{\phi}_{\delta}(y) \leq 2^{A} \tilde{\phi}_{\delta}(x)} \tilde{\phi}(x-y) dy 
\lesssim \tilde{\phi}_{\delta}(x) 
\]
(here we used $A > k$ so that $\tilde{\phi} \in L^1(\R^k)$), and
\[
\int_{\tilde{\phi}_{\delta}(y) > 2^{A} \tilde{\phi}_{\delta}(x)} \tilde{\phi}(x-y) \tilde{\phi}_{\delta}(y) dy 
\leq 2^A \tilde{\phi}_{\delta}(x) \int_{\tilde{\phi}_{\delta}(y) > 2^{A} \tilde{\phi}_{\delta}(x)} \tilde{\phi}_{\delta}(y) dy 
\lesssim \tilde{\phi}_{\delta}(x),
\]
which holds because when $\tilde{\phi}_{\delta}(y) > 2^{A} \tilde{\phi}_{\delta}(x)$, one has $\tilde{\phi}_{\delta}(x) = \delta^{\frac{k(k+1)}{2}} (\delta^i |x_i|)^{-A}$ for some $i = 1, \dots, k$, with $(\delta^i |y_i|)^{-A} > 2^A (\delta^i |x_i|)^{-A}$, i.e. $|x_i| > 2 |y_i|$, so $|x_i-y_i| \geq \frac{|x_i|}{2}$, which implies $\tilde{\phi}(x-y) \leq |x_i-y_i|^{-A} \leq 2^A |x_i|^{-A} = 2^A \delta^{iA - \frac{k(k+1)}{2}} \tilde{\phi}_{\delta}(x) \leq 2^A \tilde{\phi}_{\delta}(x)$ (the last inequality used $A \geq \frac{k(k+1)}{2}$).
\end{proof}

\begin{lemma}
\label{lem:convolution-H}
Let $\Phi_{B}$ be as in Corollary~\ref{cor:torsion-local} with $A' \geq A+k-2$.
Then, for any $\nu \in (0,1]$, $I' \in \Part{\nu}$, $\xi' \in I'$, and $l \in \{1,\dots,k-1\}$, the following holds:
Let $H \subset \R^{k}$ be the $l$-dimensional subspace given by the orthogonal complement of $V^{k-l}(\xi')$ and $B_{H}(0,r)$ be the ball in $H$ centered at $0$ and of radius $r$.
Then 
\begin{equation}
\label{eq:10}
(\Phi_{B_H(0,\nu^{-(k-l+1)})} *_H \phi_{I'})(x) \lesssim \phi_{I'}(x)
\end{equation}
for every $x \in \R^k$ where $*_H$ denotes convolution on the subspace $H$.
\end{lemma}
\begin{proof}
By a change of variables, it suffices to show this in the case when $\xi' = 0$.
Since $\nu/2 \leq |I'| \leq \nu$, the desired inequality is equivalent to
\begin{equation}
\label{eq:9}
\int_{\R^l} \nu^{k'l} (1+\nu^{k'}|z|)^{-A'} (1+\sum_{i=1}^{k-l} \nu^i |x_i| + \sum_{j=k'}^k \nu^j |x_j-z_j|)^{-A}  dz
\lesssim (1+\sum_{i=1}^k \nu^i |x_i|)^{-A},
\end{equation}
where $k':=k-l+1$.
The left hand side is clearly bounded by $(1+\sum_{i=1}^{k-l} \nu^i |x_i|)^{-A}$, since we can drop the sum over $j$.
Also, for any $j_0 \in \{k', \ldots, k\}$, we have
\begin{equation}
\label{eq:12}
LHS\eqref{eq:9}
\lsm \int_{\R^l} \nu^{k'l} (1+\nu^{k'}|z|)^{-A'} (1+ \nu^{j_0} |x_{j_0}-z_{j_0}|)^{-A}  dz.
\end{equation}
Noting $|z| \sim |z_{j_0}| + |z'|$ where $z' \in \R^{l-1}$ is obtained from $z$ by dropping $z_{j_0}$, and integrating over $z'$ using the identity $\int_{\R^{l-1}} (M + |z'|)^{-A'} dz' = M^{-(A'-l+1)} \int_{\R^{l-1}} (1+|z'|)^{-A'} dz'$ for all $M > 0$, we see that
\begin{align}\label{eq:11}
\eqref{eq:12}
\lesssim \int_{\R} \nu^{k'} (1+\nu^{k'}|z_{j_0}|)^{-(A'-l+1)} (1+\nu^{j_0} |x_{j_0}-z_{j_0}|)^{-A}  dz_{j_0} \lesssim (1+  \nu^{j_0} |x_{j_0}|)^{-A}.
\end{align}
In the last inequality we used $A'-l+1 \geq A'-k+2 \geq A > 1$ and $\nu^{j_0} \leq \nu^{k'}$, which holds since $l \leq k-1$ and $j_0 \geq k'$ respectively,
and appealed to the dimension $k=1$ case of the inequality $\tilde{\phi}_{\delta_1} * \tilde{\phi}_{\delta_2} \lesssim \tilde{\phi}_{\delta_2}$ for all 
$0 < \delta_2 \leq \delta_1$. The latter is equivalent, via scaling, to the inequality \eqref{eq:phi_tilde_bdd} we proved earlier.
Since \eqref{eq:11} holds for any $j_0 \in \{k',\dots,k\}$, we have the desired estimate \eqref{eq:9}.
\end{proof}

\printbibliography

\end{document}